\newcommand{\zd}{\mathbb{Z}^{d}}
\newcommand{\tdr}{\td\times\rd}
\newcommand{\LNr}{\Lambda_N\times\rd}
\newcommand{\LNkr}{\Lambda_{N_k}\times\rd}
\newcommand{\LNB}[1]{\Lambda_N\times\overline{\mathbb{B}}_{#1}}
\title{Continuous time Markov chain based\\ approximation of stationary and weak KAM Hamilton-Jacobi equations}
\author{Yurii Averboukh}
\date{}
		\email{ayv@imm.uran.ru}
\begin{document}
	
	\maketitle
\begin{abstract}
	Main objects of the paper are stationary and weak KAM Hamilton-Jacobi equations on the finite-dimensional torus. 
	The key idea of the paper is to replace the underlying calculus of variations problems with continuous time Markov decision problems. This directly leads to an approximation of the stationary Hamilton-Jacobi equation by the Bellman equation for a discounting Markov decision problem.
	 Developing elements of the weak KAM theory for the Markov decision problem, we obtain an approximation of the effective Hamiltonian. Additionally,  convergences of the functional parts of the discrete weak KAM equations and  Mather measures are shown. It turns out that the approximating equations are systems of algebraic equations. Thus, the paper's result can be seen as  numerical schemes for stationary and weak KAM Hamilton-Jacobi equations. 
	\msccode{49L25, 37J51, 60J28, 93E20, 70H09}
	\keywords{Hamilton-Jacobi equation, discrete weak KAM theory, Markov chain approximation, discrete  Mather measure}	
\end{abstract}

\section{Introduction}
\paragraph{Overview of the main results.}
The paper is concerned with the discrete approximations of solutions of stationary Hamilton-Jacobi equation
\begin{equation}\label{intro:eq:lambda_HJ}
	\lambda u_\lambda+H(x,-\nabla u_\lambda)=0
\end{equation} and the weak KAM equation 
\begin{equation}\label{intro:eq:weak_KAM}
	H(x,-\nabla u)=\bar{H}.
\end{equation} 
The Hamiltonian $H$ is assumed to be periodical in $x$, i.e., we consider them on the $d$-dimensional torus $\td$. Notice that in the weak KAM equation, the unknowns are a function $u:\td\rightarrow\mathbb{R}$ and a constant $\bar{H}$.

The stationary Hamilton-Jacobi equation describes the value of the discounting infinite horizon problem of calculus of variations:
\begin{equation}\label{intro:criterion:stationary}
	\text{minimize }\int_{0}^{+\infty}e^{-\lambda s}L(x(s),\dot{x}(s))ds
\end{equation}	
subject to 
\begin{equation}\label{intro:condition:stationary} x(\cdot)\in \operatorname{AC}([0,+\infty);\td),\ \ x(t)=x_*.\end{equation} 
In this case (see~\cite{Fleming2006, Bardi2008c}), the value function of   problem \eqref{intro:criterion:stationary}, \eqref{intro:condition:stationary} is equal to $u_\lambda(x_*)$, where $u_\lambda$ is a viscosity solution of~\eqref{intro:eq:lambda_HJ}. 

Similarly,  weak KAM equation \eqref{intro:eq:weak_KAM} provides~\cite{Fathi2012, Biryuk2010} the fixed points of the backward Lax-Oleinik operator. This means that a pair $(u,\bar{H})$ is a viscosity solution of \eqref{intro:eq:weak_KAM} if and only if, for each $T>0$,
\begin{equation*}\label{intro:problem:H_T}\begin{split}
	u(x_*)=\min\Bigg\{\int_0^TL(x(t),&\dot{x}(t))dt+u(x(T)):\\ &x(\cdot)\in\operatorname{AC}([0,T];\td),\, x(0)=x_*\Bigg\}+\bar{H}T.\end{split}\end{equation*} The number $\bar{H}$ is called an effective Hamiltonian or a Ma\~n\'e critical value.

Our approach relies on the approximation of an absolutely continuous curve $x(\cdot)$ by a continuous-time Markov chain on the regular lattice $\Lambda_N\triangleq \mathbb{Z}^d/(N^{-1}\mathbb{Z}^d)$ with the generator $Q^N(v)=(Q_{x,y}^N(v))_{x,y\in\Lambda_N}$ that depends on a vector $v=(v_1,\ldots,v_d)^T$ and is defined by the rule: 
\begin{equation*}\label{intro:intro:Q}
	Q_{x,y}^N(v)\triangleq \left\{
	\begin{array}{ll}
	 N|v_i|,& y=x+h\operatorname{sgn}(v_i)e_i,\\
	 -N\sum_{j=1}^N|v_j|, & y=x,\\
	 0, &\text{otherwise}.
	\end{array}
	\right.
\end{equation*}  Hereinafter,  $h=N^{-1}$, $\operatorname{sgn}(a)$ denotes the sign of the number $a$, whilst $e_i$ stands for the $i$-th coordinate vector.

For this Markov chain, we consider the criterion
\[\text{minimize }\mathbb{E}\int_{0}^{+\infty}e^{-\lambda t }L(X_t,V_t)dt,  \] where $X_t$ and $V_t$ are a state and a stochastic control respectively. The corresponding Bellman equation turns out to be a system of algebraic equations. 

The first main result of the paper is the rate of approximation of a solution to stationary Hamilton-Jacobi equation~\eqref{intro:eq:lambda_HJ} by the Bellman equation for the aforementioned discounting Markov decision problem. We show that it is of order $N^{-1/2}$ (see Theorem~\ref{rate_lambda:th:rate}). 

The study of the limit behavior of the discounting  Markov decision problem in the case where $\lambda\rightarrow 0$ makes it possible to develop some elements of the weak KAM theory for continuous time Markov chains. In particular, we derive  the weak KAM equation on the lattice that is also a system of algebraic equations. As for the continuous phase space case, we are seeking for a function and  an effective Hamiltonian. 

The second main result is the fact that the effective Hamiltonian for the lattice $\Lambda_N$ approximates the number $\bar{H}$ with an error of the order ${N^{-1/2}}$ (see Theorem~\ref{main_res:th:approx_weak_KAM}). Additionally, the functional parts of the discrete weak KAM equations converge up to subsequence to the functional part of the solution of  weak KAM equation~\eqref{intro:eq:weak_KAM} (see Theorem~\ref{approx_weak_KAM:th:function}).

Recall that, in the continuous phase space  case, the effective Hamiltonian can be characterized via Mather measures those minimize the action $\nu\mapsto \int_{\tdr} L(x,v)\nu(d(x,v))$, where $L$ stands for the Lagrangian. We also introduce the concept of Mather measures for the weak KAM theory on the lattices and show their convergence up to subsequence to a Mather measure for the classical weak KAM theory (see Theorems~\ref{mather:th:mather_ex},~\ref{mather:th:mather_limit}).

\paragraph{Literature overview.}

The modern theory of first order  Hamilton-Jacobi equations relies on the notion of viscosity solutions proposed by Crandall and Lions~\cite{Crandall1983}. This concept, in particular, provides a characterization of the value of  optimal control problems  through  Bellman equations. We refer  to~\cite{Bardi2008c,Fleming2006} for the exposition of the viscosity equation and its applications to the optimal control theory. 

The passing to the limit  in the stationary Hamilton-Jacobi equation as the discounting factor tends to the zero leads to the weak KAM equation~\cite{lions_et_al}. This result is primary used within the study of homogenization of the evolutionary Hamilton-Jacobi equation~\cite{lions_et_al,Evans2002a}. Another source of weak KAM equation is the weak KAM theory itself~\cite{fathi2008weak,Sorrentino}. Recall that this theory studies the long time behavior of the calculus of variation problems and the Euler-Lagrange flows. In particular, the infinitesimal form of the definition of calibrated curve immediately yields the weak KAM  equation~\cite{fathi2008weak,Fathi2012,Evans2008,Evans2004,Biryuk2010}.  Notice that the effective Hamiltonian also known as the Ma\~n\'e critical value provides the averaged optimal outcome for the long time calculus of variations problem. The effective Hamiltonian can be calculated using a Mather measure~\cite{Mather1991} (see also~\cite{Biryuk2010,Sorrentino,Evans2002,fathi2008weak}) that is a measure on the tangent bundle minimizing the action of the Lagrangian over the set of measure invariant w.r.t. the Euler-Lagrange flow (the equivalent form of this condition is obtained in~\cite{Bernard2008,Mane1992}).

There are several extensions and analogs of the weak KAM theory (see, in particular,~\cite{Arnaud2023,Evans2001,Gomes2005,Gangbo2010}). We especially mention papers~\cite{Gomes2002,Soga2021,Iturriaga2005,Mitake2017} where  stochastic analogs of the weak KAM theory are developed. Papers~\cite{Gomes2002,Iturriaga2005,Mitake2017} deal with a problem coming from perturbation of the dynamics by the Brownian motion and give an insights into the selection problem for  Mather measures, whilst in~\cite{Soga2021} the weak KAM theory for the discrete time random walk on the regular lattice is constructed. There, in fact, it is shown that the corresponding weak KAM equations converge to a solution of the continuous phase space weak KAM equation if the time and space steps vanish. More precisely, effective Hamiltonians converge to the continuous phase space effective Hamiltonian, while the function parts converge up to the choice of a subsequence. Thus, one can regard  the results of~\cite{Soga2021} as a numerical method for  weak KAM equation~\eqref{intro:eq:weak_KAM}. This concept is close to one of the paper. However, in the contrast to~\cite{Soga2021}, we start with the continuous time Markov chain. This gives half less number of algebraic equation needed to assure the same order of approximation. Moreover, we impose milder conditions and do not use the completeness of the Euler-Lagrange flow.

Finally, we refer to~\cite{Averboukh2016}, where an approximation of an optimal control problem by a continuous time Markov decision problem was developed for the case of compact    control space. Additionally, the approximation technique for the Hamilton-Jacobi equation based on discrete-time Markov chains was examined in~\cite{Soga2020}.

\paragraph{Organization of the paper.} In Section~\ref{sect:prel}, we give the general notation and assumptions. Additionally, here we remind the definitions of viscosity solutions for equations~\eqref{intro:eq:lambda_HJ},~\eqref{intro:eq:weak_KAM}. 
The next section (Section \ref{sect:markov}) is concerned with the controlled continuous-time Markov chain defined on a regular lattice that plays the crucial role in the paper. We recall the required concepts from the theory of stochastic processes and compute the Hamiltonian for corresponding Markov decision problems. Furthermore,  we evaluate the $L^2$-distance between an absolutely continuous curve and a controlled Markov chains generated by a stochastic strategy. 
The Bellman equation for the discounting Markov decision problem and the existence of an optimal strategy are discussed in Section~\ref{sect:lambda}. 
Then (see Section~\ref{sect:rate_lambda}), we prove the approximation result for the stationary  Hamilton-Jacobi equation. 
Elements of the weak KAM theory for the continuous time Markov decision problem on the lattice are developed in Section~\ref{sect:weak_KAM}. Below, we obtain the  convergence of the  weak KAM equations for the Markov decision problem to the weak KAM equation on the torus (see Section~\ref{sect:approx_weak_KAM}). Finally, in Section~\ref{sect:Mather} we introduce the concept of Mather measure for the Markov decision problem on the lattice and derive the convergence of the Mather measures on regular lattices to a continuous phase space Mather measure.


\section{Preliminaries}\label{sect:prel}
\subsection{General notation and assumptions}
If $(X,\rho_X)$ is a metric space,  $x\in X$, $r>0$, then $\mathbb{B}_r(x)$ stands for the open ball in $X$ of radius $r$ centered at $x$,  i.e.,
\[\mathbb{B}_r(x)\triangleq \big\{y\in X:\, \rho_X(x,y)< r\big\}.\]

	Let $\td\triangleq \rd/\zd$  denote the $d$-dimensional flat torus. Recall that an element of $\td$ is a set given by the rule:
\[x=\{\tilde{x}+n:n\in\zd\}\] for some $\tilde{x}\in\rd$. We denote the standard Euclidean norm on $\rd$ by $|\cdot|$. Further, with some abuse of notation, for $x,y\in \td$, we put
\[|x-y|\triangleq \min\{|x'-y'|:\, x'\in x,\, y'\in y\}.\] The quantity $|x-y|$ is a distance on $\td$. 

Elements of $\rd$ are considered as column vectors, while $\rds$ consists of row vectors. Below we regard $\rd$ as the tangent space to $\td$, and $\rds$ as the cotangent space.  

Further, if $x\in\td$, $\varphi$ is a function defined in a neighborhood of $x$ that is differentiable at $x$, then $\nabla\phi(x)$ denotes  the derivative of $\phi$ at $x$. We assume that $\nabla\phi(x)$ is  a row-vector.

Assume that we are given with a function $L:\td\times\rd\rightarrow\mathbb{R}$ that is called a \textit{Lagrangian}. We impose the following condition on the function $L$:
\begin{enumerate}[label=(L\arabic*)]
	\item\label{prel:assumption:c} $L$ and $L_x$ are continuous;
	\item\label{prel:assumption:positive_definite} $L$ is convex;
	\item\label{prel:assumption:super_linear} $L$ satisfies the superlinear growth condition, i.e., for each $a\geq 0$ there exists a constant $g(a)$ such that 
	\[L(x,v)\geq a|v|+g(a).\]
\end{enumerate} 

In the following, we will widely use the notation 
\begin{equation}\label{prel:intro:K_c}
	K(c)\triangleq \sup_{x\in\td,|v|\leq c}|L_x(x,v)|.
\end{equation}

The Hamiltonian $H:\td\times\rds\rightarrow\mathbb{R}$ is defined by the Legendre transform:
\[H(x,p)\triangleq \max_{v\in\rd}\Big[pv-L(x,v)\Big].\] 

\subsection{Stationary Hamilton-Jacobi  and weak KAM equations}	
	  
The first main object of the paper is  stationary Hamilton-Jacobi equation~\eqref{intro:eq:lambda_HJ}.

We consider the concept of viscosity solutions~\cite{Bardi2008c,Crandall1983}. A function $u_\lambda$ is a viscosity solution of~\eqref{intro:eq:lambda_HJ} if, for every point $x\in\td$ and each smooth function $\psi$ defined in some  neighborhood of $x$  such that the mapping $y\mapsto u_\lambda(y)-\psi(y)$ attains
the minimum (respectively, maximum)   at the point $x$, one has  that $ \lambda u_\lambda(x)+H(x,-\nabla\psi(x))\geq 0$ (respectively,  $\lambda  u_\lambda(x)+H(x,-\nabla\psi(x))\leq 0$).

The second object of the paper is  weak KAM equation~\eqref{intro:eq:weak_KAM}.
As above, its solution is considered in the viscosity sense: a pair $(u,\bar{H})$ is a viscosity solution of~\eqref{intro:eq:weak_KAM} provided that, for every point $x\in\td$ and each  function $\psi\in C^1(\mathbb{B}_r(x))$ for some $r>0$ such that the mapping $y\mapsto u(y)-\psi(y)$ attains
the minimum (respectively, maximum)   at the point $x$, one has  that $H(x,-\nabla\psi(x))\geq \bar{H}$ (respectively,  $H(x,-\nabla\psi(x))\leq \bar{H}$).

In~\cite{lions_et_al}, it is shown that, if, for each sufficiently small $\lambda>0$, $u_\lambda$ solves \eqref{intro:eq:lambda_HJ}, then up subsequence $(u_\lambda(x)-u_\lambda(z),-\lambda u_\lambda(x))$ converge uniformly on $\td$ to a pair $(u(x),\bar{H})$ that is a viscosity solution of weak KAM equation~\eqref{intro:eq:weak_KAM}   (here $z$ is a fixed point on $\td$).


The constant $\bar{H}$ can be characterized through a so called Mather measure $\mu$. To introduce this concept, we follow~\cite{Biryuk2010} and put
\begin{equation}\label{prel:intro:weight}
	\mathscr{w}(v)\triangleq \inf_{x\in\td}L(x,v)\vee 1.
\end{equation} Further, we denote by $\mathcal{M}$ the set of probability measures  on $\tdr$ with finite integral of $\mathscr{w}$, i.e.,
\[\mathcal{M}\triangleq \Bigg\{\nu\in\mathcal{P}(\tdr):\int_{\tdr}\mathscr{w}(v)\nu(d(x,v))<\infty.\Bigg\}\]

A Mather measure~\cite{Biryuk2010} for the torus $\td$ is a probability  on $\tdr$ that minimizes the functional
\[\nu\mapsto \int_{\tdr}L(x,v)\nu( dxdv)\] over the set of measures $\nu\in\mathcal{M}$ satisfying the holonomic constraints:
\[\int_{\tdr}\nabla\phi(x)v\nu( d(x,v))=0\] for each $\phi\in C^1(\td)$. It is shown (see, in particular,~\cite{Biryuk2010}) that, if $\mu$ is a Mather measure for $\td$, then 
\[-\bar{H}=\int_{\tdr}L(x,v)\mu( d(x,v)).\]

\section{Controlled continuous time Markov chain}\label{sect:markov}
\subsection{Construction of approximating Markov chain}

The approximation results derived in the paper rely on a construction of  continuous time Markov chains on infinite or finite time interval. To unify the presentation, we will denote a finite or infinite time by interval $\mathcal{I}$, i.e., either $\mathcal{I}=[0,T]$ or $\mathcal{I}=[0,+\infty)$.

 To explain the main idea of construction of the approximation continuous time Markov chain, we  notice that each function $x(\cdot)\in\operatorname{AC}(\mathcal{I},\rd)$ is entirely determined by $x(0)$ and a velocity $v(\cdot)\in L^1(\mathcal{I},\rd)$. We say that a pair $(x(\cdot),v(\cdot))$ is a control process on $\mathcal{I}$ provided that $x(\cdot)\in C(\mathcal{I},\td)$, $v(\cdot)\in L^1_{\operatorname{loc}}(\mathcal{I},\rd)$ and
 \begin{equation*}\label{markov:eq:x_v}
 	\frac{d}{dt}x(t)=v(t).
 \end{equation*} 

In the Introduction, we already briefly described the main concept of approximating Markov chain. We fix
\begin{enumerate}
	\item a natural number $N$;
	\item a regular lattice $\Lambda_N\triangleq (h\mathbb{Z}^d)/\mathbb{Z}^d\subset \td$. 
\end{enumerate} Hereinafter, we denote
\[h=N^{-1}.\] Additionally, as we mentioned above,  $e_i$ stands for the $i$-th coordinate vector on $\rd$. 

The Kolmogorov matrix of the approximating Markov chain is constructed as follows. If $v=(v_1,\ldots,v_d)^T\in\rd$, then we define the matrix 
$Q^N(v)=(Q^N_{x,y}(v))_{x,y\in\Lambda_N}$ by the following rule:
\begin{equation*}\label{markov:intro:Kolmogorov}
	Q^N_{x,y}(v)\triangleq \left\{
	\begin{array}{ll}
		h^{-1}|v_i|, & y=x+\operatorname{sgn}(v_i)he_i,\\
		-h^{-1}\sum_{j=1}^d|v_j|, & y=x,\\
		0, & \text{otherwise}.
	\end{array}
	\right.
\end{equation*} 
Now let us define  stochastic control processes those are determined by this Kolmogorov matrix. First, we start with open-loop controls.
\begin{definition}\label{markov:def:open_loop}
	We say that a 6-tuple $(\Omega,\mathcal{F},\{\mathcal{F}_{t}\}_{t\in \mathcal{I}},\mathbb{P},X,V)$ is a controlled Markov chain process for the lattice $\Lambda_N$ provided that
	\begin{itemize}
		\item $(\Omega,\mathcal{F},\{\mathcal{F}_{t}\}_{t\in \mathcal{I}},\mathbb{P})$ is a filtered probability space;
		\item $X$ is a $\{\mathcal{F}_{t}\}_{t\in \mathcal{I}}$-adopted process with values in $\Lambda_N$;
		\item $V$ is a $\{\mathcal{F}_{t}\}_{t\in \mathcal{I}}$-progressively measurable process with values in $\rd$ such that $\mathbb{E}\int_{0}^t|V_s|dt<\infty$ for each $t\in\mathcal{I}$;
		\item for each $\phi:\Lambda_N\rightarrow \mathbb{R}$ the process
		\[\phi(X_t)-\int_0^t Q^N(V_s)\phi(X_s)ds\] is a $\{\mathcal{F}_t\}_{t\in \mathcal{I}}$-martingale.
	\end{itemize} Hereinafter, $\mathbb{E}$ stands for the expectation corresponding to the probability $\mathbb{P}$.
\end{definition} Below, we denote the set of all controlled Markov chain process  for the lattice $\Lambda_N$ on the time interval~$\mathcal{I}$ satisfying the initial condition $X(0)=x_*$ $\mathbb{P}$-a.s. by $\operatorname{MCP}_N(\mathcal{I},x_*)$. 

The most convenient tool in the theory of controlled Markov chain is feedback strategies (policies).
\begin{definition}\label{markov:def:policy}
	A feedback strategy on $\Lambda_N$ is a measurable mapping $\pi:\mathcal{I}\times\Lambda_N\rightarrow\rd$. A stationary feedback strategy is a mapping $\pi:\Lambda_N\rightarrow\mathbb{R}^d$. 
\end{definition}
A feedback strategy $\pi$ defines a Kolmogorov matrix
$\mathcal{Q}^N[\pi,t]=(\mathcal{Q}_{x,y}^N[\pi,t])_{x,y\in\Lambda_N}$ with entries
\[\mathcal{Q}_{x,y}^N[\pi,t]=Q_{x,y}^N[\pi(t,x)].\] 

If $\pi$ is a stationary feedback strategy, we will write simply $\mathcal{Q}^N[\pi]$ instead of $\mathcal{Q}^N[\pi,t]$.

\begin{definition}\label{markov:def:motion_policy}
Given a feedback strategy $\pi$, we say that $(\Omega,\mathcal{F},\{\mathcal{F}_{t}\}_{t\in \mathcal{I}},\mathbb{P},X)$ is a motion produced by $\pi$ provided that 
\begin{itemize}
	\item $(\Omega,\mathcal{F},\{\mathcal{F}_{t}\}_{t\in \mathcal{I}},\mathbb{P})$ is a filtered probability space;
	\item $X$ is $\{\mathcal{F}_{t}\}_{t\in \mathcal{I}}$-adopted process with values in $\Lambda_N$;
	\item for every $t\in\mathcal{I}$,
	\[\mathbb{E}\int_{0}^t|\pi(s,X_s)|ds<\infty;\]
	\item for each $\phi:\Lambda_N\rightarrow \mathbb{R}$, the process
	\begin{equation}\label{lambda_control:expression:Q_v}
		\phi(X_t)-\int_{0}^t\mathcal{Q}^N[\pi,s]\phi(X_s)ds
	\end{equation} is a $\{\mathcal{F}_{t}\}_{t\in \mathcal{I}}$-martingale.
\end{itemize}
\end{definition} If $(\Omega,\mathcal{F},\{\mathcal{F}_{t}\}_{t\in \mathcal{I}},\mathbb{P},X)$ is a motion produced by $\pi$, then, letting 
\[V_t\triangleq \pi(t,X_t),\]  we obtain a  controlled Markov chain process $(\Omega,\mathcal{F},\{\mathcal{F}_{t}\}_{t\in \mathcal{I}},\mathbb{P},X,V)$.

Further, a distribution on $\Lambda_N$ is a sequence $m=(m_{x})_{x\in\Lambda_N}$ with nonnegative entries such that
\[\sum_{x\in\Lambda_N}m_{x}=1.\] We say that a motion $(\Omega,\mathcal{F},\{\mathcal{F}_{t}\}_{t\in [0,+\infty)},\mathbb{P},X)$ produced by a feedback strategy $\pi$ has the initial distribution $m_0 =\{m_{0,x}\}_{x\in\Lambda_N}$ if 
\[\mathbb{P}(X_0=x)=m_{0,x},\ \ x\in\Lambda_N.\]  Additionally, we say that a motion $(\Omega,\mathcal{F},\{\mathcal{F}_{t}\}_{t\in [0,+\infty)},\mathbb{P},X)$ produced by $\pi$ has the initial state $z$ provided that 
\[X(0)=z,\ \ \mathbb{P}-\text{a.s.} \] This corresponds to the initial distribution $\mathbbm{1}_z=(\mathbbm{1}_{z,x})_{x\in\Lambda_N}$ with entries
\[
\mathbbm{1}_{z,x}\triangleq \left\{\begin{array}{cc}
	1, & x=z,\\
	0,& x\neq z.
\end{array}\right.
\]

There exists (see \cite[Theorem 5.4.1]{Kolokoltsov}) at least one motion produced by the stationary feedback strategy  $\pi$ with the initial distribution equal to $m_0$.

Notice that an evolution of  distributions produced by the feedback strategy $\pi$ and the initial distribution $m_0$ is described by a mapping $\mathcal{I}\ni t\mapsto m(t)=(m_x(t))_{x\in\Lambda}$ such that 
\begin{equation}\label{markov:property:m}
	m_{x}(t)=\mathbb{P}(X_t=x).
\end{equation}
The function $ m(\cdot)$ satisfies by the Kolmogorov equation 
\begin{equation}\label{markov:eq:Kolmogorov}
	\frac{d}{dt}m(t)=m(t)\mathcal{Q}^N[\pi,t],\ \ m(t)=m_0.
\end{equation} Here, we regard $m(t)$  as a row-vector.

\subsection{Finite differences}
Let $\phi:\Lambda_N\rightarrow\mathbb{R}$, $x\in\Lambda_N$ and $i\in \{1,\ldots, d\}$. Define, for $x\in\Lambda_N$,
\begin{equation}\label{markov:intro:Deltas_i}
	\begin{split}
		&\Delta_{N,i}^+\phi(x)\triangleq \frac{\phi(x+he_i)-\phi(x)}{h},\\
		&\Delta_{N,i}^-\phi(x)\triangleq \frac{\phi(x-he_i)-\phi(x)}{h}.
	\end{split}
\end{equation} If there exists a smooth function $u$ such that $\phi(x)=u(x)$ on $\Lambda_N$, then the quantity $\Delta_{N,i}^+\phi(x)$ is the standard right  approximation of the partial derivative of $u$  at $x$ w.r.t. $x_i$. Analogously, $\Delta_{N,i}^-\phi(x)$ is a left approximation of the $-\partial_{x_i}u(x)$.

Notice that 
\begin{equation}\label{markov:equality:Delta_shift}
	\begin{split}
		&\Delta_{N,i}^+\phi(x)=-\Delta_{N,i}^-\phi(x+he_i),\\
		&\Delta_{N,i}^-\phi(x)=-\Delta_{N,i}^+\phi(x-he_i).
	\end{split}
\end{equation}

Further, we set 
\begin{equation*}
	\Delta_{N}\phi(x)=((\Delta_{N,1}^+\phi(x),\Delta_{N,1}^-\phi(x)),\ldots, (\Delta_{N,d}^+\phi(x),\Delta_{N,d}^-\phi(x))).
\end{equation*} Thus, it is convenient to work with  sequences of pairs. 
If $\xi=((\xi_1^+,\xi_1^-),\ldots, (\xi_d^+,\xi_d^-))$ is a $d$-tuple of pairs, where $\xi_i^+,\xi_i^-\in\mathbb{R}$, $v\in \rd$, then, with some abuse of notation, we denote
\begin{equation*}
	\xi\cdot v\triangleq \sum_{i=1}^{d}\big[\xi_i^+ v_i^++\xi_i^i v_i^-\big].
\end{equation*}
Notice that, for each $\phi:\Lambda_N\rightarrow\mathbb{R}$ and $x\in\Lambda_N$, one has that
\begin{equation}\label{markov:eq:Q_Delta}
	\sum_{y\in\Lambda_N}Q_{x,y}^N[v]\phi(y)=\Delta_{N}\phi(x)\cdot v.
\end{equation} 
Thus, if   $(\Omega,\mathcal{F},\{\mathcal{F}_{t}\}_{t\in \mathcal{I}},\mathbb{P},X,V)$ is an controlled Markov chain process for the lattice $\Lambda_N$, then, for each function $\phi:\Lambda_N\rightarrow\mathbb{R}$, and $s,r\in\mathcal{I}$, $s<r$,
\[\mathbb{E}\phi(X_r)-\mathbb{E}\phi(X_s)=\mathbb{E}\int_s^r\big(\Delta_{N}\phi(X_t)\cdot V_t\big)dt.\] In particular, given a feedback strategy $\pi$ and a corresponding motion $(\Omega,\mathcal{F},\{\mathcal{F}_{t}\}_{t\in \mathcal{I}},\mathbb{P},X)$, we have that 
\[\mathbb{E}\phi(X_r)-\mathbb{E}\phi(X_s)=\mathbb{E}\int_s^r\big(\Delta_{N}\phi(X_t)\cdot \pi(t,X_t)\big)dt.\]

Now, for $x\in\Lambda_N$, $\xi= ((\xi_i^+,\xi_i^-))_{i=1}^d$, where $\xi_i^+,\xi_i^-\in\rd$, we set
\begin{equation}\label{prel:intro:H_lattice}
	\mathcal{H}_N(x,\xi)\triangleq \sup_{v\in\rd}\Big[\xi\cdot v-L(x,v)\Big].
\end{equation} 
This and \eqref{markov:eq:Q_Delta} give that 
\begin{equation}\label{prel:equality:Ham}
	\begin{split}
		\mathcal{H}_N(x,(-\Delta_{N})\phi(x))&=\max_{v\in\rd}\Big[(-\Delta_{N})\phi(x)\cdot v-L(x,v)\Big]\\&=-\min_{v\in\rd}\Bigg[\sum_{y\in\Lambda_N} Q^N_{x,y}[v]\phi(y)+L(x,v)\Bigg].
	\end{split}
\end{equation} 
Here, \[(-\Delta_{N})\phi(x)=(((-\Delta_{N,1}^+)\phi(x),(-\Delta_{N,1}^-)\phi(x)),\ldots,((-\Delta_{N,d}^+)\phi(x),(-\Delta_{N,d}^-)\phi(x))).\]

\subsection{Distance between the deterministic evolution and the Markov chain}

We recall that $\td=\rd/\mathbb{Z}^d$, while $\Lambda_N=(h\mathbb{Z}^d)/\mathbb{Z}^d$. It is convenient to denote by $[\tilde{x}]$ the equivalence class of $\tilde{x}\in\rd$, i.e.,
\[[\tilde{x}]\triangleq \{\tilde{x}+n:n\in\mathbb{Z}^d\}.\]

Let $\pi$ be a feedback strategy on $\Lambda_N$.  We assume that its entries are uniformly bounded. Denote by $\hat{\pi}$  a feedback strategy on $h\mathbb{Z}^d$ such that, for $\tilde{x}\in h\mathbb{Z}^d$, $\hat{\pi}(t,\tilde{x})=\pi(t,[\tilde{x}])$.

Now we define the  infinite matrix $\widehat{\mathcal{Q}}^N[\pi,t]=\{\widehat{\mathcal{Q}}^N_{\tilde{x},\tilde{y}}[\pi,t]\}_{\tilde{x},\tilde{y}\in h\mathbb{Z}^d}$ indexed with elements of $h\mathbb{Z}^d$ by the rule:
\[\widehat{\mathcal{Q}}^N_{\tilde{x},\tilde{y}}[\pi,t]\triangleq \left\{
\begin{array}{ll}
	h^{-1}|\hat\pi_i(t,\tilde{x})|, & \tilde y=\tilde x+h\operatorname{sgn}(\hat{\pi}_i(t,\tilde{x}))e_i,\\
	-h^{-1}\sum_{j=1}^d |\hat\pi_j(t,\tilde{x})|, & \tilde{y}=\tilde{x},\\
	0, & \text{otherwise}.
\end{array}
\right. \] Notice that, if $\tilde{y}=\tilde{x}\pm he_i$, then $\widehat{\mathcal{Q}}^N_{\tilde{x},\tilde{y}}[\pi,t]=\mathcal{Q}^N_{x,y}[\pi,t]$, where $x=[\tilde{x}]$, $y=[\tilde{y}]$.

Let us introduce the  generator $\mathcal{L}^\pi_t$ on $\rd\times (h\mathbb{Z}^d)$ by the following rule: for $\phi\in C^1(\rd\times (h\mathbb{Z}^d))$ with at most quadratic growth,
\[\mathcal{L}_t^\pi\phi(\tilde x^1,\tilde x^2)\triangleq \nabla_{\tilde x^1}\phi(\tilde x^1,\tilde x^2)\pi(t,\tilde{x}^2)+\sum_{\tilde{y}\in h\mathbb{Z}^d}\widehat{\mathcal{Q}}^N_{\tilde x^2,\tilde{y}}[\pi,t]\phi(\tilde x^1,\tilde{y}).\] Hereinafter, we denote by $C^1(\rd\times (h\mathbb{Z}^d))$ the set of continuous functions from $\rd\times (h\mathbb{Z}^d)$ to $\mathbb{R}$ those are continuously differentiable w.r.t. the first variable.

It follows from \cite[Theorem 5.4.1]{Kolokoltsov} that, given $(\tilde x^1_*,\tilde x^2_*)\in\rd\times (h\mathbb{Z}^d)$, there exist a filtered probability space $(\Omega,\mathcal{F},\{\mathcal{F}_t\}_{t\in \mathcal{I}},\mathbb{P})$ and $\{\mathcal{F}_t\}_{t\in \mathcal{I}}$ adopted process $(\widetilde{X}^1,\widetilde{X}^2)$ such that 
\begin{equation}\label{markov:eq:tilde_X_ini}
	(\widetilde{X}^1_0,\widetilde{X}^2_0)=(\tilde x^1_*,\tilde x^2_*)
\end{equation} 
and, for each  $\phi\in C^1(\rd\times (h\mathbb{Z}^d))$ with at most quadratic growth, one has that
the process
\begin{equation}\label{markov:quantity:phi_int}
	\phi(\widetilde{X}^1_t,\widetilde{X}^2_t)-\int_0^t \mathcal{L}_s^\pi\phi(\widetilde{X}^1_s,\widetilde{X}^2_s)ds
\end{equation} 
is a $\{\mathcal{F}_t\}_{t\in \mathcal{I}}$-martingale. Notice that, in this case, $\widetilde{X}^1$ satisfies $\mathbb{P}$-a.s. the system of ODEs
\[\frac{d}{dt}\widetilde{X}^1_t=\hat\pi(t,\widetilde{X}^2_t),\] while $\widetilde{X}^2$ is a continuous time Markov chain with Kolmogorov matrix $\widehat{\mathcal{Q}}^N[\pi,\cdot]$.
Furthermore, if we let 
\begin{equation}\label{markov:intro:X}
	X^1_t\triangleq [\widetilde{X}^1_t],\ \ X^2_t\triangleq [\widetilde{X}^2_t], 
\end{equation} 
then 
\[\frac{d}{dt}{X}^1_t=\pi(t,{X}^2_t),\] and $(\Omega,\mathcal{F},\{\mathcal{F}_t\}_{t\in \mathcal{I}},\mathbb{P},X^2)$ is a motion produced by the feedback strategy $\pi$ and the initial  state~$x^2_*=[\tilde{x}^2_*]$. 

The key result of this section is the following.
\begin{lemma}\label{markov:lm:distance} Let 
	\begin{itemize}	
		\item $\pi$ be a feedback strategy such that, for some constant $c>0$, $|\pi(t,x)|\leq c$;
		\item $x^1_*\in\td$, $x^2_*\in \Lambda_N$, 
		\item $\tilde{x}_*^1\in \rd$, $\tilde{x}^2_*\in h\mathbb{Z}^d$ be such $x^1_*=[\tilde{x}^1_*]$, $x^2_*=[\tilde{x}^2_*]$ and $|x_*^1-x^2_*|=|\tilde x_*^1-\tilde x^2_*|$;
		\item $(\Omega,\mathcal{F},\{\mathcal{F}_t\}_{t\in\mathcal{I}},\mathbb{P})$ be a filtered probability space; 
		\item $(\widetilde{X}^1,\widetilde{X}^2)$ be a stochastic process defined on $(\Omega,\mathcal{F},\{\mathcal{F}_t\}_{t\in\mathcal{I}},\mathbb{P})$ with values in $\rd\times (h\mathbb{Z}^d)$ satisfying conditions \eqref{markov:eq:tilde_X_ini} and \eqref{markov:quantity:phi_int};
		\item $(X^1,X^2)$ be defined by \eqref{markov:intro:X}.
	\end{itemize} 
	Then, for each $t\in \mathcal{I}$,
	\begin{equation}\label{markov:ineq:distance}
		\mathbb{E}|{X}^1_t-{X}^2_t|^2\leq\mathbb{E}|\widetilde{X}^1_t-\widetilde{X}^2_t|^2\leq |x^1_*-x^2_*|+\sqrt{d}cN^{-1}t.
	\end{equation} 	
\end{lemma}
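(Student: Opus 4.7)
The proof splits naturally into the two inequalities.

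For the first inequality, the plan is to invoke the very definition of the torus distance: since $X^i_t = [\widetilde{X}^i_t]$, the elements $\widetilde{X}^1_t - \widetilde{X}^2_t$ and $\widetilde{X}^1_t - \widetilde{X}^2_t + n$ for $n \in \mathbb{Z}^d$ are all representatives of $X^1_t - X^2_t$, and in particular $|X^1_t - X^2_t| \leq |\widetilde{X}^1_t - \widetilde{X}^2_t|$ pointwise in $\omega$. Taking expectations of squares gives the bound immediately.

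For the second inequality, the strategy is to apply the martingale identity to the test function $\phi(\tilde{x}^1,\tilde{x}^2) = |\tilde{x}^1 - \tilde{x}^2|^2$, which has at most quadratic growth and is $C^1$ in the first variable, hence lies in the class where \eqref{markov:quantity:phi_int} is a martingale. The core computation is to evaluate $\mathcal{L}_t^\pi \phi$. The drift term gives $\nabla_{\tilde{x}^1}\phi(\tilde{x}^1,\tilde{x}^2) \hat\pi(t,\tilde{x}^2) = 2(\tilde{x}^1-\tilde{x}^2)^{\top}\hat\pi(t,\tilde{x}^2)$. The jump term is a sum over $i$ of $h^{-1}|\hat\pi_i(t,\tilde{x}^2)|$ times
\begin{equation*}
|\tilde{x}^1 - \tilde{x}^2 - h\operatorname{sgn}(\hat\pi_i(t,\tilde{x}^2)) e_i|^2 - |\tilde{x}^1 - \tilde{x}^2|^2 = -2h\operatorname{sgn}(\hat\pi_i(t,\tilde{x}^2))(\tilde{x}^1_i - \tilde{x}^2_i) + h^2,
\end{equation*}
which sums to $-2\hat\pi(t,\tilde{x}^2)\cdot(\tilde{x}^1 - \tilde{x}^2) + h\sum_{i=1}^d |\hat\pi_i(t,\tilde{x}^2)|$. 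The first-order pieces cancel exactly, leaving $\mathcal{L}_t^\pi\phi(\tilde{x}^1,\tilde{x}^2) = h\sum_{i=1}^d |\hat\pi_i(t,\tilde{x}^2)|$. This cancellation is the crucial observation and is the reason the bound is linear, not quadratic, in $t$.

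From the martingale property and the initial condition $|\tilde{x}^1_* - \tilde{x}^2_*| = |x^1_* - x^2_*|$, I would then obtain
\begin{equation*}
\mathbb{E}|\widetilde{X}^1_t - \widetilde{X}^2_t|^2 = |x^1_* - x^2_*|^2 + h\,\mathbb{E}\int_0^t \sum_{i=1}^d |\hat\pi_i(s,\widetilde{X}^2_s)|\,ds.
\end{equation*}
Finally, bounding $\sum_i|\hat\pi_i| \leq \sqrt{d}\,|\hat\pi| \leq \sqrt{d}\,c$ by Cauchy--Schwarz and the uniform bound on $\pi$, and recalling $h = N^{-1}$, yields the stated estimate.

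The main (and only) nontrivial step is the cancellation of the first-order terms in $\mathcal{L}^\pi_t \phi$ when $\phi$ is the squared distance; everything else is an application of the martingale identity and elementary inequalities. I do not foresee any other obstacle, though one should check carefully that $|\tilde{x}^1 - \tilde{x}^2|^2$ is admissible in \eqref{markov:quantity:phi_int}, which follows because of the quadratic-growth hypothesis explicitly mentioned in the construction of the coupled process.
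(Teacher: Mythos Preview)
Your proof is correct and follows exactly the paper's approach: apply the martingale identity to the test function $\mathscr{q}(\tilde{x}^1,\tilde{x}^2)=|\tilde{x}^1-\tilde{x}^2|^2$, compute $\mathcal{L}^\pi_t\mathscr{q}=h\sum_i|\hat\pi_i|$ (the paper just says ``direct computation'' where you spell out the first-order cancellation), and bound by $\sqrt{d}\,c\,h$. Your derivation in fact yields $|x^1_*-x^2_*|^2$ rather than $|x^1_*-x^2_*|$ for the initial term, which is what the paper's own proof also produces; the unsquared form in the stated inequality appears to be a typo and is in any case immaterial since every application in the paper takes $x^1_*=x^2_*$.
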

\begin{proof}
	The first inequality in \eqref{markov:ineq:distance} directly follows from the construction of $({X}^1,{X}^2)$. 
	
	To prove the second inequality, we first notice that, due to the assumption that $\pi$ has uniformly bounded entries, $\mathbb{E}|\widetilde{X}^1_t-\widetilde{X}^2_t|^2$ is bounded. Further, let $\mathscr{q}(\tilde x_1,\tilde{x}_2)\triangleq |\tilde{x}_1-\tilde{x}_2|^2$.
	Plugging in  \eqref{markov:quantity:phi_int} the function $\mathscr{q}$, we obtain that 
	\[
	\begin{split}
		\mathbb{E}|\widetilde{X}^1_t-\widetilde{X}^2_t|^2\leq  \mathbb{E}|\widetilde{X}^1_0-\widetilde{X}^2_0|^2+\mathbb{E}\int_0^t\mathcal{L}^\pi_s\mathscr{q}(\widetilde{X}^1_s,\widetilde{X}^2_s)ds.
	\end{split}
	\]
	Direct computation gives that 
	\[\mathcal{L}^\pi_t\mathscr{q}(x_1,x_2)=h\sum_{i=1}^d|\pi_i(t,x_2)|\leq hc\sqrt{d}.\] Therefore, taking into account that $h=N^{-1}$, we obtain the second inequality in \eqref{markov:ineq:distance}.
\end{proof}

\section{Discounting Markov decision problem}\label{sect:lambda}
In this section, we work with the infinite time interval, i.e., we put $\mathcal{I}=[0,+\infty)$.

If $(\Omega,\mathcal{F},\{\mathcal{F}_t\}_{t\in [0,+\infty)},\mathbb{P},X,V)$ is a controlled Markov chain process for the lattice $\Lambda_N$, then its quality is evaluated by the quantity
\[\mathbb{E}\int_{0}^{+\infty}e^{-\lambda t}L(X_t,V_t).\] 

Furthermore, the outcome  of the feedback strategy $\pi$ and the initial distribution $m_0$ is equal to
\[\mathcal{J}_{N,\lambda}[\pi,m_0]\triangleq \mathbb{E}\int_{0}^{+\infty}e^{-\lambda t}L(X_t,\pi(t,X_t)dt,\] where $(\Omega,\mathcal{F},\{\mathcal{F}_{t}\}_{t\in [0,+\infty)},\mathbb{P},X)$ is a motion produced by $\pi$ and $m_0$. The quantity $\mathcal{J}_{N,\lambda}[\pi,m_0]$ does not depend on the concrete choice of the process $(\Omega,\mathcal{F},\{\mathcal{F}_{t}\}_{t\in [0,+\infty)},\mathbb{P},X)$ produced by $\pi$ and $m_0$.  Indeed, \eqref{markov:property:m} implies that 
\begin{equation*}\label{lambda:equality:J_N_lambda}
	\mathcal{J}_{N,\lambda}[\pi,m_0]=\int_{0}^{+\infty}e^{-\lambda t}\int_{\td}L(x,\pi(t,x))m(t)dt.
\end{equation*} 
Here $m(\cdot)$ satisfies \eqref{markov:eq:Kolmogorov}. If $m_0$ is equal to $1$ at $z$ and zero elsewhere, then we write $\mathcal{J}_{N,\lambda}[\pi,z]$ instead of $\mathcal{J}_{N,\lambda}[\pi,m_0]$.

The Bellman equation for the examined discounting Markov decision problem takes the form:
\begin{equation}\label{lambda:eq:HJ_Delta}
	\lambda \varphi_{N,\lambda}(x)+\mathcal{H}_N(x,(-\Delta_N)\varphi_{N,\lambda}(x))=0,\ \ x\in\Lambda_N.
\end{equation} This fact is formalized in the  following statement.

\begin{proposition}\label{lambda:prop:existence}
	Equation \eqref{lambda:eq:HJ_Delta} has a unique solution. Moreover, the stationary feedback strategy $\pi_{\lambda,N}^*$ defined by the rule:
	\begin{equation}\label{lambda:incl:pi_star}
		\pi_{\lambda,N}^*(x)\in\underset{v\in \rd}{\operatorname{Argmax}}\Big[(-\Delta_N)\varphi_{N,\lambda}(x)\cdot v-L(x,v)\Big]
	\end{equation} 
is optimal, i.e.,
	\begin{equation}\label{lambda:equality:vaphi_pi_opt}\varphi_{N,\lambda}(z)=\mathcal{J}_{N,\lambda}[\pi_{\lambda,N}^*,z]\end{equation} and, for each  $(\Omega,\mathcal{F},\{\mathcal{F}_{t}\}_{t\in [0,+\infty)},\mathbb{P},X,V)\in\operatorname{MCP}_N([0,+\infty),z)$, one has
	\begin{equation}\label{lambda:ineq:J_vaphi}
		\mathbb{E}\int_0^{+\infty}e^{-\lambda t}L(X_t,V_t)dt\geq \varphi_{N,\lambda}(z).
	\end{equation}
\end{proposition}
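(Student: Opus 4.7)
The plan is to establish existence of $\varphi_{N,\lambda}$ by a Banach fixed-point argument after a uniformization trick, prove uniqueness by a discrete maximum principle, and deduce both optimality claims \eqref{lambda:equality:vaphi_pi_opt} and \eqref{lambda:ineq:J_vaphi} from the standard verification argument based on the martingale property in Definition~\ref{markov:def:open_loop}. Throughout I would use the equivalent form of \eqref{lambda:eq:HJ_Delta} given by \eqref{prel:equality:Ham}, namely the finite system of $N^d$ algebraic equations
\[\lambda\varphi(x)=\min_{v\in\rd}\Bigg[L(x,v)+\sum_{y\in\Lambda_N}Q^N_{x,y}[v]\varphi(y)\Bigg],\quad x\in\Lambda_N.\]

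For existence, I first restrict to controls $|v|\leq R$ and uniformize: for $M$ sufficiently large, depending on $R$, the kernel $P^M_{x,y}[v]\triangleq \delta_{xy}+Q^N_{x,y}[v]/M$ is stochastic on $\Lambda_N$ for every $|v|\leq R$, so the $R$-truncated Bellman equation is equivalent to $\varphi=\mathcal{T}_R\varphi$ with
\[(\mathcal{T}_R\varphi)(x)\triangleq \min_{|v|\leq R}\Bigg[\frac{L(x,v)}{\lambda+M}+\frac{M}{\lambda+M}\sum_y P^M_{x,y}[v]\varphi(y)\Bigg],\]
a $\tfrac{M}{\lambda+M}$-contraction on $(\mathbb{R}^{\Lambda_N},\|\cdot\|_\infty)$, hence admitting a unique fixed point $\varphi^R_{N,\lambda}$ by Banach's theorem. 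Testing the fixed-point relation with $v\equiv 0$ (for which $P^M$ is the identity) gives $\varphi^R_{N,\lambda}\leq \lambda^{-1}\max_{x}L(x,0)$, while \ref{prel:assumption:super_linear} with $a=0$ produces $\varphi^R_{N,\lambda}\geq \lambda^{-1}g(0)$; both bounds are uniform in $R$. Consequently $\|(-\Delta_N)\varphi^R_{N,\lambda}\|_\infty$ is bounded uniformly in $R$, and by \ref{prel:assumption:super_linear} the Argmax in \eqref{prel:intro:H_lattice} is attained inside a ball $\{|v|\leq R_0\}$ independent of $R$. For $R\geq R_0$ the truncation is inactive and $\varphi_{N,\lambda}\triangleq\varphi^{R_0}_{N,\lambda}$ solves the full equation.

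Uniqueness comes from a discrete maximum principle. If $\varphi_1,\varphi_2$ are two solutions and $v^*_2(x)$ achieves the minimum for $\varphi_2$, then plugging $v^*_2(x)$ into the inequality form of the equation for $\varphi_1$ and subtracting yields, at any $x_0\in\operatorname{Argmax}(\varphi_1-\varphi_2)$,
\[\lambda(\varphi_1-\varphi_2)(x_0)\leq \sum_{y\neq x_0}Q^N_{x_0,y}[v^*_2(x_0)]\big((\varphi_1-\varphi_2)(y)-(\varphi_1-\varphi_2)(x_0)\big)\leq 0,\]
using nonnegativity of off-diagonal entries of $Q^N$ together with its zero-row-sum property. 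Swapping the roles of $\varphi_1$ and $\varphi_2$ forces $\varphi_1\equiv\varphi_2$.

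For the optimality statements, a measurable stationary selector $\pi^*_{\lambda,N}$ of the Argmax in \eqref{lambda:incl:pi_star} with $|\pi^*_{\lambda,N}|\leq R_0$ exists by continuity of $L$ and standard measurable selection. Given any $(\Omega,\mathcal{F},\{\mathcal{F}_t\},\mathbb{P},X,V)\in\operatorname{MCP}_N([0,+\infty),z)$, combining the martingale property of Definition~\ref{markov:def:open_loop} with integration by parts for $t\mapsto e^{-\lambda t}\varphi_{N,\lambda}(X_t)$ produces
\[e^{-\lambda t}\mathbb{E}\varphi_{N,\lambda}(X_t)-\varphi_{N,\lambda}(z)=\mathbb{E}\int_0^t e^{-\lambda s}\Bigg[\sum_y Q^N_{X_s,y}[V_s]\varphi_{N,\lambda}(y)-\lambda\varphi_{N,\lambda}(X_s)\Bigg]ds.\]
The Bellman equation makes the bracketed integrand $\geq -L(X_s,V_s)$ always, with equality when $V_s=\pi^*_{\lambda,N}(X_s)$. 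Rearranging, sending $t\to+\infty$, and invoking boundedness of $\varphi_{N,\lambda}$ to kill $e^{-\lambda t}\mathbb{E}\varphi_{N,\lambda}(X_t)$ yields \eqref{lambda:ineq:J_vaphi} in general and \eqref{lambda:equality:vaphi_pi_opt} along $\pi^*_{\lambda,N}$. The most delicate point is justifying the Dynkin-type identity for $e^{-\lambda t}\varphi_{N,\lambda}(X_t)$ under only the weak admissibility $\mathbb{E}\int_0^t|V_s|ds<\infty$ from Definition~\ref{markov:def:open_loop}; this can be handled by approximating $e^{-\lambda\cdot}$ by piecewise-constant functions on a partition, applying the martingale property separately on each subinterval, and passing to the mesh limit, exploiting that $|Q^N_{x,y}[V_s]\varphi_{N,\lambda}(y)|\leq C|V_s|$ is integrable.
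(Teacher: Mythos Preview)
Your proof is correct and follows the same high-level strategy as the paper: restrict controls to a ball $\{|v|\le R\}$, solve the truncated Bellman equation, obtain uniform-in-$R$ bounds on the solution (via $v\equiv 0$ from above and $L\ge g(0)$ from below), and then invoke superlinear growth~\ref{prel:assumption:super_linear} to show the truncation is inactive once $R$ exceeds some $R_0=R_0(N,\lambda)$. The paper obtains the truncated solution by citing general MDP results (Guo--Hern\'andez-Lerma), whereas you give a self-contained argument via the uniformization $P^M=\mathrm{Id}+M^{-1}Q^N$ and Banach's fixed-point theorem; this buys independence from external references at essentially no cost. Similarly, the paper simply writes that uniqueness and \eqref{lambda:equality:vaphi_pi_opt}--\eqref{lambda:ineq:J_vaphi} ``can be proved using the standard verification arguments,'' while you spell these out: your discrete maximum principle for uniqueness and your Dynkin-type identity for $e^{-\lambda t}\varphi_{N,\lambda}(X_t)$ are exactly those standard arguments. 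One minor remark: the delicate point you flag about justifying the product formula is more cleanly handled by noting that $M_t\triangleq\varphi_{N,\lambda}(X_t)-\int_0^t Q^N[V_s]\varphi_{N,\lambda}(X_s)\,ds$ is by definition a true martingale and $e^{-\lambda\cdot}$ is bounded deterministic, so $\int_0^t e^{-\lambda s}\,dM_s$ is itself a martingale with mean zero; this avoids the piecewise-constant approximation.
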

\begin{proof}
	The proof relies on truncations arguments. Let $A>0$. We consider the Markov decision problem 
	\[\text{minimize }\mathbb{E}\Bigg[\int_0^{+\infty}e^{-\lambda t}L(X_t,V_t)dt\Bigg]\] over the set of 6-tuples $(\Omega,\mathcal{F},\{\mathcal{F}_{t}\}_{t\in [0,+\infty)},\mathbb{P},X,V)\in\operatorname{MCP}_N([0,+\infty),z)$ satisfying the additional constraint $|V_t|\leq A$. Due to \cite[Lemma 4.4, Theorems 4.6, 4.10]{Guo_Hernandez_Lerma}, this problem has a value denoted by $\varphi_{N,\lambda}^A$. The latter satisfies the following Bellman equation on $\Lambda_N$:
	\[\lambda\varphi_{N,\lambda}^A(x)+\max_{|v|\leq A}\Big[(-\Delta_N)\varphi_{N,\lambda}^A(x)\cdot v-L(x,v)\Big]=0.\]
	Now recall that due to condition \ref{prel:assumption:super_linear}
	\[L(x,v)\geq g(0).\] Thus, $\varphi_{N,\lambda}^A(x)\geq \lambda^{-1}g(0)$ for each $x\in\Lambda_N$. On the other hand, letting $V_t\equiv 0$, we obtain that $\varphi_{N,\lambda}^A(x)\leq \lambda^{-1}L(z,0)$. Therefore, denoting \[C_0'\triangleq |g(0)|\vee \Big[\sup_{x\in\td}L(x,0)\Big],\] we conclude that 
	\[ |\varphi_{N,\lambda}^A(x)|\leq \lambda^{-1}C_0',\ \ x\in\Lambda_N.\] 
	 Hence, for each $x\in\Lambda_N$ and $i\in \{1,\ldots,d\}$,
	\[\Big|\Delta_{N,i}^+\varphi_{N,\lambda}^A(x)\Big|,\, \Big|\Delta_{N,i}^-\varphi_{N,\lambda}^A(x)\Big|\leq 2N\lambda^{-1} C_0'.\] 
	
	Choosing in condition \ref{prel:assumption:super_linear} $a=2N\lambda^{-1}\sqrt{d}C_0'+1$, we have that, if $x\in\Lambda_N$, $v\in\rd$,
	\[\begin{split}
		(-\Delta_N)\varphi_{N,\lambda}^A(&x)\cdot v-L(x,v)\\&\leq 2N\lambda^{-1}\sqrt{d}C_0'|v|-(2N\lambda^{-1}\sqrt{d}C_0'|v|+1)|v|-g(2N\lambda^{-1}\sqrt{d}C_0'+1)\\&=-|v|-g(2N\lambda^{-1}\sqrt{d}C_0'+1).
	\end{split}\] Hence, 
	\[(-\Delta_N)\varphi_{N,\lambda}^A(x)\cdot v-L(x,v)\rightarrow -\infty\text{ as }|v|\rightarrow\infty\] uniformly w.r.t. $|v|$.
	Thus, there exists a constant $A_{N,\lambda}$ such that, for every $A>A_{N,\lambda}$, 
	\[\max_{|v|\leq A}\Big[(-\Delta_N)\varphi_{N,\lambda}^A(x)\cdot v-L(x,v)\Big]=\sup_{v\in\rd}\Big[(-\Delta_N)\varphi_{N,\lambda}^A(x)\cdot v-L(x,v)\Big].\] Therefore, if $A>A_{N,\lambda}$, the function $\varphi_{N,\lambda}^A$ satisfies
	\[\lambda\varphi_{N,\lambda}^A(x)+\sup_{v\in\rd}\Big[(-\Delta_N)\varphi_{N,\lambda}^A(x)\cdot v-L(x,v)\Big]=0.\] This gives the existence of solution to \eqref{lambda:eq:HJ_Delta}. The fact that $\pi_{\lambda,N}^*$ is optimal, i.e., the fact that equality \eqref{lambda:equality:vaphi_pi_opt} and inequality \eqref{lambda:ineq:J_vaphi} are valid, can be proved using the standard verification arguments. They also give the uniqueness of the solution of \eqref{lambda:eq:HJ_Delta}. 
\end{proof}

\section{Approximation of a solution of the stationary Hamilton-Jacobi equation}\label{sect:rate_lambda}
The aim of this section is to prove the following approximation result.
\begin{theorem}\label{rate_lambda:th:rate}
There exists a constant $C_1$ depending only on the Lagrangian $L$ such that if	 $u_\lambda$, $\varphi_{N,\lambda}$ are solutions of \eqref{intro:eq:lambda_HJ} and \eqref{lambda:eq:HJ_Delta} respectively, while $x\in\Lambda_N$, one has that
	\[|u_\lambda(x)-\varphi_{N,\lambda}(x)|\leq C_1\lambda^{-3/2}N^{-1/2}.\]
\end{theorem}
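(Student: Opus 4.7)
The plan is to compare $u_\lambda(z)$ and $\varphi_{N,\lambda}(z)$ on each side by exploiting that both are infima of discounted expected costs, using the coupling from Lemma~\ref{markov:lm:distance} to relate an absolutely continuous curve and a Markov chain generated by a common feedback. Concretely, for any feedback $\pi$ with $|\pi|\le c$, Lemma~\ref{markov:lm:distance} furnishes a pair $(X^1,X^2)$ starting at $(z,z)$ with $\dot X^1_t=\pi(t,X^2_t)$ and $\mathbb{E}|X^1_t-X^2_t|^2\le \sqrt{d}\,c\,N^{-1}t$. Combining this with the Lipschitz bound $|L(y_1,v)-L(y_2,v)|\le K(c)|y_1-y_2|$ (a consequence of~\ref{prel:assumption:c} and~\eqref{prel:intro:K_c}), Cauchy--Schwarz, and the identity $\int_0^\infty e^{-\lambda t}\sqrt{t}\,dt=\tfrac{\sqrt{\pi}}{2}\lambda^{-3/2}$ yields a coupling error of order $c^{1/4}K(c)\lambda^{-3/2}N^{-1/2}$. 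This will serve as the template for both inequalities.

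For the upper bound $\varphi_{N,\lambda}(z)\le u_\lambda(z)+C_1\lambda^{-3/2}N^{-1/2}$, I would fix $\varepsilon>0$ and pick a near-optimal admissible pair $(x^*,v^*)$ for $u_\lambda(z)$ whose velocity satisfies $|v^*(\cdot)|\le c$. The a priori bound $|u_\lambda|\le\lambda^{-1}C_0'$ (from the constant curve $x\equiv z$), superlinearity~\ref{prel:assumption:super_linear}, and a truncation argument in the spirit of the proof of Proposition~\ref{lambda:prop:existence} make this possible with $c$ depending only on $L$. Using $\pi(t,x)\triangleq v^*(t)$ as a state-independent feedback, the coupling has $X^1_t=x^*(t)$, and the template applied to
\[
\varphi_{N,\lambda}(z)-u_\lambda(z)\le \mathbb{E}\int_0^\infty e^{-\lambda t}\bigl[L(X^2_t,v^*(t))-L(X^1_t,v^*(t))\bigr]dt+\varepsilon
\]
gives the desired bound. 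For the reverse inequality I take the optimal stationary feedback $\pi^*_{\lambda,N}$ from Proposition~\ref{lambda:prop:existence} and build the coupling with $\dot X^1_t=\pi^*_{\lambda,N}(X^2_t)$ starting at $(z,z)$. For a.e.\ $\omega$ the curve $X^1_\cdot(\omega)$ is absolutely continuous and admissible in the continuous problem, so $u_\lambda(z)\le \mathbb{E}\int_0^\infty e^{-\lambda t}L(X^1_t,\pi^*_{\lambda,N}(X^2_t))\,dt$; subtracting $\varphi_{N,\lambda}(z)=\mathcal{J}_{N,\lambda}[\pi^*_{\lambda,N},z]$ produces an expression of the same form.

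The principal obstacle is the uniform velocity bound $|\pi^*_{\lambda,N}(x)|\le c$ needed on the Markov side. The coarse estimate $|\varphi_{N,\lambda}|\le\lambda^{-1}C_0'$ yields only $|(-\Delta_N)\varphi_{N,\lambda}|=O(N\lambda^{-1})$; via~\ref{prel:assumption:super_linear} this produces an $N$-dependent bound on the argmax in~\eqref{lambda:incl:pi_star}. To resolve this, I would first prove a uniform discrete Lipschitz estimate $|\varphi_{N,\lambda}(y)-\varphi_{N,\lambda}(x)|\le Ch$ for neighboring lattice points $x,y\in\Lambda_N$, with $C$ depending only on $L$. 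This can be extracted from the DPP by running the Markov chain from $x$ with a constant feedback $c_0 e_i$ until it reaches $y=x+he_i$ (expected time $h/c_0$ and bounded expected cost), mirroring the standard proof of uniform Lipschitz regularity of $u_\lambda$ on $\td$. With that in hand, $|(-\Delta_N)\varphi_{N,\lambda}|$ is uniformly bounded, and~\ref{prel:assumption:super_linear} gives the required uniform bound on $|\pi^*_{\lambda,N}|$, closing the argument.
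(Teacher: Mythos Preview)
Your overall architecture is exactly the paper's: couple via Lemma~\ref{markov:lm:distance} with a common feedback, bound the cost discrepancy by $K(c)\,\mathbb{E}|X^1_t-X^2_t|$, and integrate $e^{-\lambda t}\sqrt{t}$ to get $\lambda^{-3/2}$. The two directions are handled just as you describe, using an optimal continuous trajectory for one inequality and the optimal stationary feedback $\pi^*_{\lambda,N}$ for the other.

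Two remarks on the auxiliary bounds, where your sketch diverges from the paper:

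\emph{Continuous side.} Your justification for a uniform bound on the optimal velocity $v^*(\cdot)$ is misdirected: the truncation in Proposition~\ref{lambda:prop:existence} yields only an $(N,\lambda)$--dependent cutoff, and the sup bound $|u_\lambda|\le\lambda^{-1}C_0'$ alone is not enough. The paper instead first proves that $u_\lambda$ is Lipschitz on $\td$ with a constant $c_1$ \emph{independent of $\lambda$} (Lemma~\ref{rate_lambda:lm:HJ_derivative}), and then, via the dynamic programming identity at Lebesgue points combined with superlinearity~\ref{prel:assumption:super_linear}, deduces $|v^*(t)|\le c_2$ depending only on $L$ (Lemma~\ref{rate_lambda:lm:v_opt_boundness}). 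You will need this Lipschitz step.

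\emph{Discrete side.} Your hitting-time argument for the uniform discrete Lipschitz bound is correct, but the paper obtains it more cheaply and without any DPP at stopping times: from $\lambda|\varphi_{N,\lambda}|\le c_0$ and the Bellman equation one has $\mathcal{H}_N(x,(-\Delta_N)\varphi_{N,\lambda}(x))\le c_0$; plugging $v=\pm e_i$ gives a one-sided bound on each $\Delta_{N,i}^\pm\varphi_{N,\lambda}(x)$, and the shift identity~\eqref{markov:equality:Delta_shift} converts this into a two-sided bound $|\Delta_{N,i}^\pm\varphi_{N,\lambda}|\le c_3$ depending only on $L$ (Lemma~\ref{rate_lambda:lm:H_N_derivaitive}). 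Superlinearity then immediately yields $|\pi^*_{\lambda,N}|\le c_5$ (Lemma~\ref{rate_lambda:lm:policy_bound}). This purely algebraic route avoids the stopping-time machinery. (Minor: your coupling error should carry $c^{1/2}$, not $c^{1/4}$.)
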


The proof of this statement relies on several auxiliary statements.
\begin{lemma}\label{rate_lambda:lm:HJ_derivative} There exist constants $c_0$ and $c_1$ depending only on $L$ such that, if
	$\lambda>0$ and $u_\lambda$ is a viscosity solution of \eqref{intro:eq:lambda_HJ}, then, for every $x,y\in\td$,
	\begin{enumerate}
		\item $\lambda|u_\lambda(x)|\leq c_0$;
		\item $|u_\lambda(x)-u_\lambda(y)|\leq c_1|x-y|$.
	\end{enumerate}
\end{lemma}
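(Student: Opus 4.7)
My plan is to use the representation formula
\[
u_\lambda(x_*)=\inf\Bigg\{\int_0^{+\infty}e^{-\lambda s}L(x(s),\dot x(s))ds:\,x(\cdot)\in\operatorname{AC}([0,+\infty);\td),\,x(0)=x_*\Bigg\},
\]
which is recalled in the Introduction, together with the dynamic programming inequality for test trajectories.

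For the first estimate, I would first fix the constant trajectory $x(\cdot)\equiv x_*$ (with zero velocity) and obtain the upper bound
\[
u_\lambda(x_*)\le \int_0^{+\infty}e^{-\lambda s}L(x_*,0)ds=\lambda^{-1}L(x_*,0)\le \lambda^{-1}M_0,
\]
where $M_0\triangleq\sup_{x\in\td}L(x,0)$ is finite by continuity of $L$ and compactness of $\td$. For the matching lower bound, I would apply \ref{prel:assumption:super_linear} with $a=0$: since $L(x,v)\ge g(0)$ for every $(x,v)$, any trajectory yields
\[
\int_0^{+\infty}e^{-\lambda s}L(x(s),\dot x(s))ds\ge \lambda^{-1}g(0),
\]
so $u_\lambda(x_*)\ge \lambda^{-1}g(0)$. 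Setting $c_0\triangleq M_0\vee|g(0)|$ finishes (1).

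For the second estimate, I would combine the dynamic programming principle with a straight-line test curve. Given $x,y\in\td$, pick lifts $\tilde x,\tilde y\in\rd$ realizing the toric distance $|x-y|=|\tilde x-\tilde y|$, set $\tau\triangleq|x-y|$, and take $\tilde x(s)\triangleq \tilde y+(s/\tau)(\tilde x-\tilde y)$ for $s\in[0,\tau]$ (with constant unit-modulus velocity); let $x(\cdot)$ be the corresponding curve in $\td$. The DPP gives
\[
u_\lambda(y)\le \int_0^\tau e^{-\lambda s}L(x(s),\dot x(s))ds+e^{-\lambda\tau}u_\lambda(x).
\]
Since $|\dot x(s)|=1$ and $L$ is continuous, the integrand is bounded by $M_1\triangleq\sup_{x\in\td,\,|v|\le 1}L(x,v)<\infty$, so the first term is at most $M_1\tau$. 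For the second term I use (1):
\[
|e^{-\lambda\tau}-1|\cdot|u_\lambda(x)|\le \lambda\tau\cdot \lambda^{-1}c_0=c_0\tau.
\]
Hence $u_\lambda(y)-u_\lambda(x)\le(M_1+c_0)|x-y|$; swapping $x$ and $y$ yields the claim with $c_1\triangleq M_1+c_0$.

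The only mildly delicate step is invoking the dynamic programming inequality for viscosity solutions of the discounted equation on $\td$, but this is standard (see \cite{Bardi2008c,Fleming2006}); aside from that, the proof is a direct application of the superlinearity bound from \ref{prel:assumption:super_linear} and the continuity of $L$ on the compact set $\td\times\overline{\mathbb{B}}_1(0)$ via \ref{prel:assumption:c}, so I expect no real obstacle.
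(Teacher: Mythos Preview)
Your proof is correct. Part~(1) is essentially identical to the paper's argument: both plug in the constant trajectory for the upper bound and invoke the superlinearity bound $L\ge g(0)$ for the lower bound.

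For part~(2) you take a genuinely different route. The paper argues purely at the PDE level, in the style of \cite[Proposition~4.1]{Bardi2008c}: it fixes $x$, takes the test function $\phi(y)=c_1|y-x|$ with $c_1\triangleq c_0+\sup_{x\in\td,|v|=1}L(x,v)+1$, and shows by contradiction (using the viscosity subsolution inequality at a putative maximizer $\hat y\neq x$) that $u_\lambda-\phi$ must attain its maximum at $x$. Your argument is instead control-theoretic: you run a unit-speed geodesic from $y$ to $x$ and combine the dynamic-programming inequality with the bound from part~(1) on the discount factor. Both are standard; your approach is arguably more direct given the paper already leans on the value-function representation, while the paper's approach has the advantage of using only the viscosity definition and not the DPP. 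One small caveat: your estimate ``the first term is at most $M_1\tau$'' implicitly assumes $M_1\ge 0$; if $M_1<0$ the inequality $M_1\lambda^{-1}(1-e^{-\lambda\tau})\le M_1\tau$ reverses, so you should write $M_1^+\tau$ (or $|M_1|\tau$) to be safe. This does not affect the conclusion.
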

\begin{proof}
	To prove the first statement, we use condition \ref{prel:assumption:super_linear}. It gives that
	\[L(x,v)\geq g(0).\]  Since $u_\lambda$ is a value function for the discounting control problem \eqref{intro:criterion:stationary}, \eqref{intro:condition:stationary}, we have that
	\[\lambda u_\lambda(z)\geq \lambda\int_0^{+\infty}e^{-\lambda t}g(0)dt=g(0).\] Simultaneously,
	\[\lambda u_\lambda(z)\leq \lambda\int_0^{+\infty}e^{-\lambda t}\sup_{x\in\td}L(x,0)dt=\sup_{x\in\td}L(x,0).\]  Letting 
	\[c_0\triangleq g(0)\vee\Big[\sup_{x\in\td}L(x,0)\Big],\] we derive the first statement.
	
	To prove the second statement, we argue as in the proof of \cite[Proposition 4.1]{Bardi2008c}. We set \[c_1\triangleq c_0+\sup_{x\in\td,|v|=1}L(x,v)+1,\] fix $x\in\tdr$ and consider a test function  $\phi(y)\triangleq c_1|y-x|$. We wish to prove that the maximum of $u_\lambda(y)-\phi(y)$ attains at $x$. Indeed, in the converse case, let $\hat{y}$ be a point where the maximum of $u_\lambda(y)-\phi(y)$ is achieved. Since we assume that $\hat{y}\neq x$, the function $\phi$ is differentiable at $\hat{y}$ with the derivative equal to $c_1\frac{\hat{y}-x}{|\hat{y}-x|}$. Using the fact that $u_\lambda$ is a solution of \eqref{intro:eq:lambda_HJ}, we have that 
	\[\lambda u_\lambda(\hat{y})+H\Bigg(\hat{y},-c_1\frac{\hat{y}-x}{|\hat{y}-x|}\Bigg)\leq 0.\] Plugging in the right-hand side of the Hamiltonian (see~\eqref{prel:intro:H_lattice}) $v\triangleq -\frac{\hat{y}-x}{|\hat{y}-x|}$, we have that 
	\[c_1-\sup_{x\in\td,|v|=1}L(x,v)\leq c_0.\] This contradicts with the choice of $c_1$. Therefore, for each $y\in\td$,
	\[u_\lambda(y)-c_1|y-x|\leq u_\lambda(x).\] Interchanging the variable, we conclude that the function $u_\lambda$  is Lipschitz continuous with the constant $c_1$ that does not depend on $\lambda$.
\end{proof}

\begin{lemma}\label{rate_lambda:lm:v_opt_boundness} Let $\lambda>0$, $x_*\in\td$ and let $(x(\cdot),v(\cdot))$ be such that $v(\cdot)\in L^1([0,+\infty);\td)$, $x(0)=x_*$, $\frac{d}{dt}x(t)=v(t)$ and
	\[u_\lambda(x_*)=\int_{0}^{+\infty}e^{-\lambda t}L(x(t),v(t))dt.\] Then,
	\[|v(t)|\leq c_2\text{ for a.e. }t\in [0,+\infty).\] Here $c_2$ is a constant determined only by the Lagrangian $L$.
\end{lemma}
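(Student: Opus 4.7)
The plan is to combine the dynamic programming principle for the optimal pair $(x(\cdot),v(\cdot))$ with the a priori bounds on $u_\lambda$ supplied by Lemma~\ref{rate_lambda:lm:HJ_derivative} and the superlinearity condition~\ref{prel:assumption:super_linear}. First, using the standard argument that any optimal trajectory realizes the value on every subinterval, I would derive the equality
\[
e^{-\lambda s}u_\lambda(x(s))-e^{-\lambda t}u_\lambda(x(t))=\int_s^t e^{-\lambda r}L(x(r),v(r))\,dr
\qquad (0\le s<t).
\]
This follows by comparing $u_\lambda(x(s))$ with the trajectory coinciding with $x(\cdot)$ on $[s,t]$ and being optimal afterwards, and noting that the opposite inequality is forced by the optimality of $(x(\cdot),v(\cdot))$.

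Next, I would estimate the left-hand side from above by splitting it as
\[
(e^{-\lambda s}-e^{-\lambda t})u_\lambda(x(s))+e^{-\lambda t}\big(u_\lambda(x(s))-u_\lambda(x(t))\big),
\]
applying the bounds $|u_\lambda|\le c_0/\lambda$ and $|u_\lambda(x(s))-u_\lambda(x(t))|\le c_1\int_s^t|v(r)|\,dr$ (the latter from Lipschitzness in Lemma~\ref{rate_lambda:lm:HJ_derivative} together with $|x(s)-x(t)|\le\int_s^t|v|$), and using $e^{-\lambda s}-e^{-\lambda t}\le\lambda(t-s)e^{-\lambda s}$. For the right-hand side I would insert the superlinear lower bound $L(x,v)\ge a|v|+g(a)$ with a free parameter $a\ge 0$; since $e^{-\lambda r}\ge e^{-\lambda t}$ on $[s,t]$,
\[
\int_s^t e^{-\lambda r}L(x(r),v(r))\,dr\ \ge\ a\,e^{-\lambda t}\!\int_s^t|v(r)|\,dr+g(a)\,\frac{e^{-\lambda s}-e^{-\lambda t}}{\lambda}.
\]
Combining the two estimates and rearranging produces
\[
(a-c_1)\,e^{-\lambda t}\!\int_s^t|v(r)|\,dr\ \le\ (c_0-g(a))\,(t-s)\,e^{-\lambda s}.
\]

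Choosing $a:=c_1+1$ (so $a>c_1$) and noting that $g(a)\le\sup_{x}L(x,0)\le c_0$, dividing by $(a-c_1)(t-s)e^{-\lambda t}$ gives
\[
\frac{1}{t-s}\int_s^t|v(r)|\,dr\ \le\ (c_0-g(c_1+1))\,e^{\lambda(t-s)}.
\]
Passing to the limit $t\downarrow s$ at every Lebesgue point of $|v(\cdot)|$ via the Lebesgue differentiation theorem yields $|v(s)|\le c_2$ for a.e.\ $s$, where $c_2:=c_0-g(c_1+1)$ depends only on $L$. The main technical point I foresee is the bookkeeping of the exponential factors: one must retain the refined estimate $e^{-\lambda s}-e^{-\lambda t}\le\lambda(t-s)e^{-\lambda s}$ rather than a crude one, since it is precisely the factor $\lambda$ which cancels against the $\lambda^{-1}$ in the uniform bound on $u_\lambda$, ensuring that the resulting constant $c_2$ is independent of $\lambda$.
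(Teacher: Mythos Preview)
Your argument is correct and follows essentially the same route as the paper: the dynamic programming identity on a short interval, the Lipschitz and $|\lambda u_\lambda|\le c_0$ bounds from Lemma~\ref{rate_lambda:lm:HJ_derivative}, superlinearity~\ref{prel:assumption:super_linear} with a suitably large slope, and the Lebesgue differentiation theorem. The only cosmetic difference is that the paper restricts to $r\le\lambda^{-1}\ln 2$ so that $e^{-\lambda t}\ge\tfrac12$ and chooses $a=2c_1+2$, whereas you track the exponential factors via $e^{-\lambda s}-e^{-\lambda t}\le\lambda(t-s)e^{-\lambda s}$ and take $a=c_1+1$; both lead to a $\lambda$-independent bound.
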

\begin{proof}
	Let $\tau$ be a Lebesgue point of the function $v(\cdot)$. For $r>0$, from the dynamic programming  arguments we have that 
	\[e^{-\lambda \tau}u_\lambda(x(\tau))=\int_0^re^{-\lambda (\tau+t)}L(x(\tau+t),v(\tau+t))dt+e^{-\lambda (\tau+r)}u_\lambda(x(\tau+r)).\] Therefore, 
	\[
	\int_{0}^{r}e^{-\lambda t}L(x(\tau+t),v(\tau+t))dt=u_\lambda(x(\tau))-u_\lambda(x(\tau+r))+u_\lambda(x(\tau+r))\big[1-e^{-\lambda r}\big]. \] 
	Using Lemma \ref{rate_lambda:lm:HJ_derivative}, we have that 
	\begin{equation}\label{rate_lambda:ineq:int_0_r}\int_{0}^{r}e^{-\lambda t}L(x(\tau+t),v(\tau+t))dt\leq c_1\int_0^r |v(\tau+t)|dt+c_0 r.\end{equation} Now assume that $r\leq \lambda^{-1}\ln 2$. Using this, inequality~\eqref{rate_lambda:ineq:int_0_r} and choosing in superlinear growth condition~\ref{prel:assumption:super_linear} $a=2c_1+2$, we obtain the following inequality
	\[\begin{split}
		(c_1+&1)\int_0^r|v(\tau+t)|dt+2^{-1}g(2c_1+2)r\\
		&\leq (2c_1+1)\int_0^re^{-\lambda t}|v(\tau+t)|dt+g(2c_1+2)\int_0^re^{-\lambda t}dt\\
		&\leq \int_{0}^{r}e^{-\lambda t}L(x(\tau+t),v(\tau+t))dt\leq c_1\int_0^r |v(\tau+t)|dt+c_0 r. \end{split}\] This gives,
	\[\int_0^r|v(\tau+t)|dt\leq \big[c_0-g(2c_1+2)/2\big]r.\] Since $\tau$ is a Lebesgue point of the function $v(\cdot)$, we conclude that 
	\[|v(\tau)|\leq c_2\triangleq \big[c_0-g(2c_1+2)/2\big].\] This and the fact that almost every points of $[0,+\infty)$ are Lebesgue points of the function $v(\cdot)$ completes the proof.
\end{proof}

\begin{lemma}\label{rate_lambda:lm:H_N_derivaitive} Let $\varphi_{N,\lambda}$ solve stationary Hamilton-Jacobi equation \eqref{lambda:eq:HJ_Delta}. Then, for each $x,y\in\Lambda_N$, $i\in \{1,\ldots,d\}$,
	\begin{itemize}
		\item $\lambda|\varphi_{N,\lambda}(x)|\leq c_0$ for each $x\in \Lambda_N$;
		\item $|\Delta_{N,i}^+\varphi_{N,\lambda}(x)|,\, |\Delta_{N,i}^+\varphi_{N,\lambda}(x)|\leq c_3$;
		\item $|\varphi_{N,\lambda}(x)-\varphi_{N,\lambda}(y)|\leq c_4|x-y|$.
	\end{itemize}
	Here, $c_0$ is the same constant as in Lemma \ref{rate_lambda:lm:HJ_derivative}, $c_3$ and $c_4$ are determined only by the Lagrangian $L$.
\end{lemma}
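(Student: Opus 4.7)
The plan is to mirror the proof of Lemma \ref{rate_lambda:lm:HJ_derivative} in the discrete setting, exploiting that the Bellman equation \eqref{lambda:eq:HJ_Delta} holds pointwise on $\Lambda_N$ as an algebraic identity; the continuous test-function gymnastics are thereby replaced by direct substitutions of specific velocities. The main obstacle is really just bookkeeping: ensuring that every constant produced is independent of both $\lambda$ and $N$.

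For the first bullet, I would use the probabilistic representation from Proposition \ref{lambda:prop:existence}. Superlinear growth \ref{prel:assumption:super_linear} with $a=0$ yields $L\geq g(0)$, so every admissible process has cost at least $\lambda^{-1}g(0)$, whence $\varphi_{N,\lambda}(x)\geq\lambda^{-1}g(0)$. For the upper bound I test the stationary strategy $\pi\equiv 0$: since $Q^N(0)$ is identically zero, the associated motion remains at the initial state $\mathbb{P}$-a.s., so its cost equals $\lambda^{-1}L(z,0)\leq \lambda^{-1}\sup_{y\in\td}L(y,0)$. Combining the two estimates gives $\lambda|\varphi_{N,\lambda}(x)|\leq c_0$ with the same $c_0$ as in Lemma \ref{rate_lambda:lm:HJ_derivative}.

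For the second bullet, equation \eqref{lambda:eq:HJ_Delta} combined with the first bullet yields
\[\mathcal{H}_N(x,-\Delta_N\varphi_{N,\lambda}(x))=-\lambda\varphi_{N,\lambda}(x)\leq c_0.\]
By definition \eqref{prel:intro:H_lattice}, this means $\xi\cdot v-L(x,v)\leq c_0$ for every $v\in\rd$, where $\xi\triangleq -\Delta_N\varphi_{N,\lambda}(x)$. Plugging $v=e_i$ and using the pairing rule (so that $\xi\cdot e_i=\xi_i^+=-\Delta_{N,i}^+\varphi_{N,\lambda}(x)$), I obtain
\[-\Delta_{N,i}^+\varphi_{N,\lambda}(x)\leq c_0+L(x,e_i)\leq c_0+\sup_{y\in\td,\,|w|\leq 1}L(y,w),\]
and plugging $v=-e_i$ similarly bounds $-\Delta_{N,i}^-\varphi_{N,\lambda}(x)$. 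Setting $c_3\triangleq c_0+\sup_{y\in\td,\,|w|\leq 1}L(y,w)$, which is finite by \ref{prel:assumption:c} and depends only on $L$, this gives the lower bounds $\Delta_{N,i}^\pm\varphi_{N,\lambda}(x)\geq -c_3$. The matching upper bounds come from the shift identity \eqref{markov:equality:Delta_shift}: $\Delta_{N,i}^+\varphi_{N,\lambda}(x)=-\Delta_{N,i}^-\varphi_{N,\lambda}(x+he_i)\leq c_3$, and symmetrically for $\Delta_{N,i}^-$.

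Finally, the third bullet is a lattice-path argument. Given $x,y\in\Lambda_N$, I pick representatives in $h\mathbb{Z}^d$ realizing the toric distance $|x-y|$ and join them by an axis-aligned lattice path $x=z_0,z_1,\ldots,z_k=y$ in $\Lambda_N$ of total length $kh\leq \sqrt{d}\,|x-y|$ (the $\ell^1$--$\ell^2$ comparison applied to the lattice displacement). Summing the one-edge estimate $|\varphi_{N,\lambda}(z_j)-\varphi_{N,\lambda}(z_{j-1})|\leq c_3 h$, inherited from the second bullet, over $j=1,\ldots,k$ yields the Lipschitz bound with $c_4\triangleq \sqrt{d}\,c_3$. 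By construction all three constants depend only on $L$, which is the $\lambda$- and $N$-independence that had to be checked.
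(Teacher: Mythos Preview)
Your proof is correct and follows essentially the same route as the paper: the first bullet via the value-function representation with the constant strategy $\pi\equiv 0$, the second by plugging $v=\pm e_i$ into the bound $\mathcal{H}_N(x,-\Delta_N\varphi_{N,\lambda}(x))\leq c_0$ and then invoking the shift identities \eqref{markov:equality:Delta_shift}, and the third via the $\ell^1$--$\ell^2$ comparison (what the paper calls ``the H\"older inequality''), yielding $c_4=\sqrt{d}\,c_3$. Your write-up is in fact slightly cleaner than the paper's, which inserts the unnecessary case distinction ``if $\Delta_{N,i}^+\varphi_{N,\lambda}(x)\leq 0$'' before plugging $v=e_i$.
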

\begin{proof}
	The proof of the first statement is the same as in Lemma \ref{rate_lambda:lm:HJ_derivative}. 
	
	Now let us proves the second statement. From the first statement, we have that, given $x\in\Lambda_N$, 
	\begin{equation*}\label{rate_lambda:ineq:sup_c_0}
		\sup_{v\in\rd}\Big[(-\Delta_N)\varphi_{N,\lambda}(x)-L(x,v)\Big]\leq c_0.
	\end{equation*} 
	If $\Delta_{N,i}^+\varphi_{N,\lambda}(x)\leq 0$, then, letting $v=e_i$ in the expression of the lattice Hamiltonian~\eqref{prel:equality:Ham}, we obtain that 
	\[\Delta_{N,i}^+\varphi_{N,\lambda}(x)\geq -(c_0+L(x,e_i)).\] Analogously, 
	if $\Delta_{N,i}^-\varphi_{N,\lambda}(x)\geq 0$, we have that
	\[\Delta_{N,i}^-\varphi_{N,\lambda}(x)\leq c_0+L(x,-e_i).\] Combining this, we conclude that, 
	\begin{equation*}\label{rate_lambda:ineq:Delta_plus_minus}
		\sup_{x\in\td,i\in \{1,\ldots,d\}}\Big(\Delta_{N,i}^-\varphi_{N,\lambda}(x)\Big)^+,\ \ 
		\sup_{x\in\td,i\in \{1,\ldots,d\}}\Big(\Delta_{N,i}^+\varphi_{N,\lambda}(x)\Big)^-\leq c_3.
	\end{equation*} Here,
	\[c_3\triangleq c_0+ \sup_{x\in\td,|v|\leq 1}L(x,v).\] From this, using equalities \eqref{markov:equality:Delta_shift}, we arrive at the second statement of the lemma.
	
	The third statement obviously follows from this, the definition of right and left differences (see \eqref{markov:intro:Deltas_i}) and the H\"older inequality.
\end{proof}

\begin{lemma}\label{rate_lambda:lm:policy_bound} There exists a constant $c_5$ determined only by the Lagrangian $L$ such that, if $\pi_{\lambda,N}^*$ satisfies \eqref{lambda:incl:pi_star}, then 
	\[|\pi_{\lambda,N}^*(x)|\leq c_5.\]
\end{lemma}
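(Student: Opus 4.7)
The plan is to exploit the argmax property of $\pi_{\lambda,N}^*(x)$ against the trivial choice $v=0$, and then close the resulting inequality using the a priori bound on the finite differences of $\varphi_{N,\lambda}$ from Lemma~\ref{rate_lambda:lm:H_N_derivaitive} together with the superlinear growth condition~\ref{prel:assumption:super_linear}. The existence of an actual maximizer (as opposed to only a supremum) was already noted in the proof of Proposition~\ref{lambda:prop:existence}; here we only care about bounding any such maximizer.

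First, I would denote $v^*\triangleq \pi_{\lambda,N}^*(x)$ and $\xi\triangleq (-\Delta_N)\varphi_{N,\lambda}(x)$. Comparing the value of the functional from~\eqref{lambda:incl:pi_star} at $v^*$ with its value at $v=0$, I get
\[
\xi\cdot v^*-L(x,v^*)\geq \xi\cdot 0-L(x,0)=-L(x,0),
\]
so that $L(x,v^*)\leq \xi\cdot v^*+L(x,0)$. The second step is to estimate $\xi\cdot v^*$. By Lemma~\ref{rate_lambda:lm:H_N_derivaitive}, each of the entries $\xi_i^+=-\Delta_{N,i}^+\varphi_{N,\lambda}(x)$ and $\xi_i^-=-\Delta_{N,i}^-\varphi_{N,\lambda}(x)$ is bounded in absolute value by $c_3$. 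Recalling the definition of the pairing,
\[
\xi\cdot v^*=\sum_{i=1}^{d}\big[\xi_i^+(v_i^*)^++\xi_i^-(v_i^*)^-\big]\leq c_3\sum_{i=1}^{d}|v_i^*|\leq c_3\sqrt{d}\,|v^*|,
\]
since $(v_i^*)^++(v_i^*)^-=|v_i^*|$ and $\|v^*\|_{1}\leq \sqrt{d}\,|v^*|$. Combining these,
\[
L(x,v^*)\leq c_3\sqrt{d}\,|v^*|+\sup_{y\in\td}L(y,0).
\]

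For the third and final step, I would apply the superlinear growth condition~\ref{prel:assumption:super_linear} with $a\triangleq c_3\sqrt{d}+1$, which yields
\[
L(x,v^*)\geq (c_3\sqrt{d}+1)|v^*|+g(c_3\sqrt{d}+1).
\]
Substituting this into the previous inequality and cancelling the $c_3\sqrt{d}\,|v^*|$ term gives
\[
|v^*|\leq \sup_{y\in\td}L(y,0)-g(c_3\sqrt{d}+1)\eqqcolon c_5,
\]
and this constant depends only on $L$ (through $c_3$ and the superlinear growth function $g$), as required.

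There is no genuine obstacle: the argument is a one-point application of the superlinear growth bound, identical in spirit to the coercivity step used in Proposition~\ref{lambda:prop:existence} to justify the existence of the argmax. The only point requiring care is the translation between the "pair" pairing $\xi\cdot v$ and the Euclidean norm of $v$, which is handled by the elementary inequality $(v_i)^++(v_i)^-=|v_i|$ together with the norm equivalence $\|\cdot\|_1\leq \sqrt{d}\,|\cdot|$ on $\rd$.
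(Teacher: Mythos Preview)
Your argument is correct and essentially coincides with the paper's proof: both use the bound $|\Delta_{N,i}^\pm\varphi_{N,\lambda}(x)|\leq c_3$ from Lemma~\ref{rate_lambda:lm:H_N_derivaitive} together with the superlinear growth condition~\ref{prel:assumption:super_linear} at $a=\sqrt{d}c_3+1$. The only cosmetic difference is in the opening inequality: the paper uses the Bellman equation to write $\Delta_N\varphi_{N,\lambda}(x)\cdot\pi_{\lambda,N}^*(x)+L(x,\pi_{\lambda,N}^*(x))=\lambda\varphi_{N,\lambda}(x)\leq c_0$, whereas you compare the maximizer against $v=0$ to get $L(x,v^*)\leq \xi\cdot v^*+\sup_y L(y,0)$; after that the two arguments are identical and yield the same constant up to replacing $c_0$ by $\sup_y L(y,0)$.
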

\begin{proof}
	From the definition of the feedback strategy $\pi_{\lambda,N}^*$ and the first statement of Lemma \ref{rate_lambda:lm:H_N_derivaitive}, we have that 
	\[\sum_{i=1}^d\Big[\Delta_{N,i}^+\varphi_{N,\lambda}(x)(\pi_{\lambda,N}^*(x))^+_i+\Delta_{N,i}^-\varphi_{N,\lambda}(x)(\pi_{\lambda,N}^*(x))^-_i\Big]+L(x,\pi_{\lambda,N}^*(x))\leq c_0.\] Here $(\pi_{\lambda,N}^*(x))_i^+$ (respectively, $(\pi_{\lambda,N}^*(x))_i^-$) stands for the positive (respectively, negative) part of the $i$-th coordinate of the vector $\pi_{\lambda,N}^*(x)$. Using  condition \ref{prel:assumption:super_linear} with $a=\sqrt{d}c_3+1$, the second statement of Lemma \ref{rate_lambda:lm:H_N_derivaitive} and the H\"older's inequality, we obtain the following estimate:
	\[-\sqrt{d}c_3|\pi_{\lambda,N}^*(x)|+(\sqrt{d}c_3+1)|\pi_{\lambda,N}^*(x)|\leq c_0-g\big(\sqrt{d}c_3+1\big).\] This implies the conclusion of the lemma with $c_5\triangleq c_0-g\big(\sqrt{d}c_3+1\big)$.
\end{proof}

\begin{proof}[Proof of Theorem \ref{rate_lambda:th:rate}]
	Let $z\in\Lambda_N$. First, let $v(\cdot)$ be such that 
	\begin{equation}\label{rate_lambda:equality:v_u}
		u_\lambda(z)=\int_0^{+\infty}e^{-\lambda t}L(x(t),v(t))dt,
	\end{equation} 
where
	\[x(t)=z+\int_0^tv(s)ds.\] The existence of such open-loop control follows from the Tonelli theorem \cite[Theorem 3.7]{Buttazzo1998}.  Due to Lemma~\ref{rate_lambda:lm:v_opt_boundness}, 
	\[|v(t)|\leq c_2.\]
	
	Let $\pi$ be a feedback strategy defined by the rule:
	\[\pi(t,x)\triangleq v(t).\] Now, we construct a filtered probability space $(\Omega,\mathcal{F},\{\mathcal{F}_t\}_{t\in [0,+\infty)},\mathbb{P})$ and a stochastic process $(X^1,X^2)$ defined on it as in Lemma \ref{markov:lm:distance} for $x_*^1=x^2_*=z$ and $\mathcal{I}=[0,+\infty)$. Notice that $X^1_t=x(t)$ for $t\in [0,+\infty)$. We have that 
	\begin{equation}\label{rate_lambda:ineq:varphi_v}
		\varphi_{N,\lambda}(x_*)\leq \mathbb{E}\int_{0}^{+\infty}e^{-\lambda t}L(X^2_t,\pi(t,X^2_t))dt=\mathbb{E}\int_{0}^{+\infty}e^{-\lambda t}L(X^2_t,v(t))dt.
	\end{equation} Lemma \ref{markov:lm:distance}  gives that
	\[\mathbb{E}|X_t^1-X^2_t|\leq (c_2)^{1/2}d^{1/4}t^{1/2}N^{-1/2}.\] Therefore,
	\[\mathbb{E}|L(X^2_t,v(t))-L(x(t),v(t))|\leq C'_1\mathbb{E}|X_t^1-X^2_t|\leq C_2't^{1/2}N^{-1/2},\] where \[C_1'\triangleq K( c_2),\ \ C_2'=C_1'(c_2)^{1/2}d^{1/4},\] while the function $K$ is defined by \eqref{prel:intro:K_c}.
	 This, equality \eqref{rate_lambda:equality:v_u} and inequality \eqref{rate_lambda:ineq:varphi_v} imply
	\begin{equation}\label{rate_lambda:ineq:u_phi_lambda_1}
		\varphi_{N,\lambda}(z)-u_\lambda(z)\leq C_2'N^{-1/2}\int_0^{+\infty}t^{1/2}e^{-\lambda t}dt=C_2'N^{-1/2}\lambda^{-3/2}\int_0^{+\infty}\alpha^{1/2}e^{-\alpha}d\alpha.
	\end{equation}
	
	To prove the opposite inequality, recall that by Proposition \ref{lambda:prop:existence}, there exists an optimal feedback strategy $\pi_{\lambda,N}^*$ that is stationary. By Lemma \ref{rate_lambda:lm:policy_bound}, 
	\[|\pi_{\lambda,N}^*(x)|\leq c_5.\] Furthermore,
	we construct a filtered probability space $(\Omega,\mathcal{F},\{\mathcal{F}_t\}_{t\in [0,+\infty)},\mathbb{P})$ and a process $(X^1,X^2)$ as in Lemma \ref{markov:lm:distance} for $\pi=\pi_{\lambda,N}^*$, $\mathcal{I}=[0,+\infty)$ and $x_*^1=x^2_*=z$. Denoting
	\[V_t\triangleq \pi_{\lambda,N}^*(X^2_t),\] we conclude that
	\[\frac{d}{dt}X_t^1=V_t,\ \ \mathbb{P}\text{-a.s.}\] Therefore,
	\begin{equation}\label{rate_lambda:ineq:u_L}
		u_\lambda(z)\leq \mathbb{E}\int_0^{+\infty}e^{-\lambda t}L(X^1_t,V_t)dt=\mathbb{E}\int_0^{+\infty}e^{-\lambda t}L(X^1_t,\pi_{\lambda,N}^*(X^2_t))dt.
	\end{equation} 
Additionally, the choice of $\pi^*_{\lambda,N}$ gives that 
	\begin{equation}\label{rate_lambda:equality:varphi_L}
		\varphi_{N,\lambda}(z)=\mathbb{E}\int_0^{+\infty}e^{-\lambda t}L(X^2_t,\pi_{\lambda,N}^*(X^2_t))dt.
	\end{equation} 
	Taking into account the boundness of $\pi_{\lambda,N}^*$, Lemma \ref{markov:lm:distance} and assumption \ref{prel:assumption:c}, we have that 
	\[\mathbb{E}|L(X^1_t,\pi_{\lambda,N}^*(X^2_t))-L(X^2_t,\pi_{\lambda,N}^*(X^2_t))|\leq C_1'\mathbb{E}|X^1_t-X^2_t|\leq C_3't^{1/2}N^{1/2}.\] Here, $C_1'$ is as above, while $C_3'\triangleq C_1'(c_5)^{1/2}d^{1/4}$.
	This, \eqref{rate_lambda:ineq:u_L} and \eqref{rate_lambda:equality:varphi_L} give that 
	\[u_\lambda(z)-\varphi_{N,\lambda}(z)\leq C_3'N^{-1/2}\int_0^{+\infty}t^{1/2}e^{-\lambda t}dt=C_3'N^{-1/2}\lambda^{-3/2}\int_0^{+\infty}\alpha^{1/2}e^{-\alpha}d\alpha. \] The conclusion of the theorem with 
	\[C_1\triangleq (C_2'\vee C_3')\cdot \int_0^{+\infty}\alpha^{1/2}e^{-\alpha}d\alpha \] follows from this and \eqref{rate_lambda:ineq:u_phi_lambda_1}.
\end{proof}

\section{Weak KAM theory on the lattice}\label{sect:weak_KAM}
In this section, we consider the weak KAM theory problem for the controlled Markov chain: 
find a function $\varphi_N :\Lambda_N\rightarrow\mathbb{R}$ and  a constant $\overline{\mathcal{H}}_N$ such that, for each $T>0$ and $z\in\Lambda_N$,
\[\begin{split}
	\varphi_N(z)=\min\Bigg\{\mathbb{E}\bigg[\int_0^TL(X_t&,V_t)dt+\varphi_N(X_T)\bigg]:  \\ 
	(\Omega&,\mathcal{F},\{\mathcal{F}_t\}_{t\in [0,T]},\mathbb{P},X,V)\in\operatorname{MCP}_N([0,T],z)
	\Bigg\}+\overline{\mathcal{H}}_NT.\end{split}\] The dynamic programming arguments give that a pair $(\varphi_N,\overline{\mathcal{H}}_N)$ is a solution of this problem if and only if they satisfies the following weak KAM equation on the lattice $\Lambda_N$:
\begin{equation}\label{weak_KAM:eq:weak_KAM_N}
	\mathcal{H}_N(x,(-\Delta_{N})\varphi_N(x))=\overline{\mathcal{H}}_N,\ \ x\in\Lambda_N.
\end{equation}
\begin{theorem}\label{main_res:th:existence_weak_KAM} For each $N$, there exists a solution of \eqref{weak_KAM:eq:weak_KAM_N}. Moreover, the constant~$\overline{\mathcal{H}}_N$ is unique.
\end{theorem}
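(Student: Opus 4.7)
The plan is to prove existence by a vanishing-discount procedure applied to \eqref{lambda:eq:HJ_Delta}, and to establish uniqueness of $\overline{\mathcal{H}}_N$ via a discrete maximum-principle argument that rests on the coordinatewise monotonicity of $\mathcal{H}_N$ in its second argument.

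For existence, fix a reference point $z\in\Lambda_N$ and, for each $\lambda>0$, let $\varphi_{N,\lambda}$ be the solution of \eqref{lambda:eq:HJ_Delta} supplied by Proposition~\ref{lambda:prop:existence}. Setting $\psi_{N,\lambda}(x)\triangleq\varphi_{N,\lambda}(x)-\varphi_{N,\lambda}(z)$, Lemma~\ref{rate_lambda:lm:H_N_derivaitive} delivers the uniform bounds $|\lambda\varphi_{N,\lambda}(z)|\leq c_0$ and $|\psi_{N,\lambda}(x)|\leq c_4|x-z|$. Since $\Lambda_N$ is a finite set, a diagonal extraction furnishes a sequence $\lambda_k\to 0$ along which $\lambda_k\varphi_{N,\lambda_k}(z)\to-\overline{\mathcal{H}}_N$ and $\psi_{N,\lambda_k}\to\varphi_N$ pointwise. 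Rewriting \eqref{lambda:eq:HJ_Delta} in the form
\[\lambda\psi_{N,\lambda}(x)+\lambda\varphi_{N,\lambda}(z)+\mathcal{H}_N(x,(-\Delta_N)\psi_{N,\lambda}(x))=0,\]
I would pass to the limit along $\lambda_k$: the first term vanishes by boundedness of $\psi_{N,\lambda}$, the second converges to $-\overline{\mathcal{H}}_N$ by construction, and the third is continuous in its second argument because $\mathcal{H}_N(x,\cdot)$ is the Legendre transform of the superlinear Lagrangian $L(x,\cdot)$ (assumption~\ref{prel:assumption:super_linear}) while $\Delta_N$ is a linear combination of point evaluations on the finite lattice. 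The resulting identity is exactly \eqref{weak_KAM:eq:weak_KAM_N}.

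For uniqueness of $\overline{\mathcal{H}}_N$, the key structural observation is that $\xi\mapsto\mathcal{H}_N(x,\xi)$ is nondecreasing in every coordinate of $\xi$: from \eqref{prel:intro:H_lattice}, the Hamiltonian is the supremum over $v\in\rd$ of $\sum_i(\xi_i^+v_i^++\xi_i^-v_i^-)-L(x,v)$, where $v_i^\pm\geq 0$. Suppose now that $(\varphi_N^j,\overline{\mathcal{H}}_N^j)$, $j=1,2$, both solve \eqref{weak_KAM:eq:weak_KAM_N}, and let $x^*\in\Lambda_N$ be a maximizer of $\varphi_N^1-\varphi_N^2$ on the finite lattice. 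For each $i$, the inequality $\varphi_N^1(x^*)-\varphi_N^2(x^*)\geq\varphi_N^1(x^*\pm he_i)-\varphi_N^2(x^*\pm he_i)$ rearranges to $(-\Delta_{N,i}^{\pm})\varphi_N^1(x^*)\geq(-\Delta_{N,i}^{\pm})\varphi_N^2(x^*)$. Invoking \eqref{weak_KAM:eq:weak_KAM_N} at $x^*$ together with the monotonicity of $\mathcal{H}_N(x^*,\cdot)$ gives $\overline{\mathcal{H}}_N^1\geq\overline{\mathcal{H}}_N^2$; swapping the roles of $j=1$ and $j=2$ yields the reverse inequality.

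The principal obstacle is the coordinatewise monotonicity of $\mathcal{H}_N$, which is what makes the discrete maximum principle function. It is a direct consequence of the particular sparse structure of the Kolmogorov matrix $Q^N$ (only nearest-neighbor shifts $\pm he_i$ with weights $|v_i|=v_i^++v_i^-$); without this feature, the elementary comparison argument above would not close, and uniqueness of the effective Hamiltonian on the lattice would require a more delicate argument.
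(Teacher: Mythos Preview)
Your existence argument via vanishing discount is essentially identical to the paper's: normalize $\varphi_{N,\lambda}$ at a fixed $z$, invoke the uniform bounds of Lemma~\ref{rate_lambda:lm:H_N_derivaitive}, extract a subsequence on the finite lattice, and pass to the limit in \eqref{lambda:eq:HJ_Delta} using continuity of $\mathcal{H}_N(x,\cdot)$.

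Your uniqueness argument, however, takes a genuinely different route. The paper proceeds probabilistically: given two solutions $(\varphi_N',\overline{\mathcal{H}}_N')$ and $(\varphi_N'',\overline{\mathcal{H}}_N'')$, it selects the optimal stationary strategy $\pi_N'$ associated with $\varphi_N'$, runs the corresponding Markov chain $X'$ from an arbitrary initial state, and obtains
\[
\mathbb{E}[\varphi_N'(z)-\varphi_N'(X_T')]=T\overline{\mathcal{H}}_N'+\mathbb{E}\!\int_0^T\! L(X_t',\pi_N'(X_t'))\,dt,
\]
while the same expectation with $\varphi_N''$ is only bounded above by the analogous expression with $\overline{\mathcal{H}}_N''$. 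Subtracting, dividing by $T$, and letting $T\to\infty$ (using boundedness of $\varphi_N',\varphi_N''$) yields $\overline{\mathcal{H}}_N''\geq\overline{\mathcal{H}}_N'$. Your argument instead exploits the coordinatewise monotonicity of $\xi\mapsto\mathcal{H}_N(x,\xi)$, which follows from the upwind structure $\xi\cdot v=\sum_i(\xi_i^+v_i^++\xi_i^-v_i^-)$ with $v_i^\pm\geq 0$, to run a discrete maximum principle at the maximizer of $\varphi_N^1-\varphi_N^2$. This is shorter and entirely elementary---no stochastic process, no limit in $T$---but it hinges on the specific nearest-neighbor form of $Q^N$; the paper's verification-type argument is more robust and would survive, for instance, generators not enjoying this monotonicity. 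Both proofs are correct.
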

\begin{proof}
	We use a method  borrowed from~\cite{lions_et_al}. Due to Lemma \ref{rate_lambda:lm:H_N_derivaitive}, one has that on $\Lambda_N$
	\begin{equation}\label{approx_weak_KAM:ineq:varphi_N_lambda}
		|\lambda\varphi_{N,\lambda}(x)|\leq c_0,
	\end{equation} while, for each $x,y\in\Lambda_N$,
	\[|\varphi_{N,\lambda}(x)-\varphi_{N,\lambda}(y)|\leq c_4|x-y|. \] Hereinafter, $\varphi_{N,\lambda}$ is the unique solution of \eqref{lambda:eq:HJ_Delta}.
	
	Furthermore, we fix, $z\in\Lambda_N$ and set
	\[\psi_{N,\lambda}(x)\triangleq \varphi_{N,\lambda}(x)-\varphi_{N,\lambda}(z).\] By construction, we have that, for each $x\in\Lambda_N$, 
	\begin{equation}\label{approx_weak_KAM:ineq:psi_N_lambda}
		|\psi_{N,\lambda}(x)|\leq C_4'\triangleq c_4\sqrt{d}/2.
	\end{equation}
	Since $\varphi_{N,\lambda}$ solves \eqref{lambda:eq:HJ_Delta}, we have that 
	\begin{equation}\label{approx_weak_KAM:equality:phi_psi} 
		\lambda\varphi_{N,\lambda}+ \mathcal{H}_N(x,(-\Delta_{N})\psi_{N,\lambda}(x))=0,\ \ x\in\Lambda_N.
	\end{equation} 
	From \eqref{approx_weak_KAM:ineq:varphi_N_lambda} and \eqref{approx_weak_KAM:ineq:psi_N_lambda}, it follows that there exist a sequence $\{\lambda_k\}_{k=1}^\infty$, a number  $\overline{\mathcal{H}}_N$ and a function $\varphi_N:\Lambda_N\rightarrow \mathbb{R}$ such that 
	\begin{itemize}
		\item $\lambda_k\rightarrow 0$ as $k\rightarrow\infty$;
		\item $\lambda_k\varphi_{N,\lambda_k}\rightarrow -\overline{\mathcal{H}}_N$ as $k \rightarrow\infty$ uniformly on $\Lambda_N$;
		\item for each $x\in\Lambda_N$, $\psi_{N,\lambda_k}(x)\rightarrow \varphi_N(x)$ as $k\rightarrow\infty$.
	\end{itemize} Passing to the limit in~\eqref{approx_weak_KAM:equality:phi_psi}, we obtain the existence of a solution to the weak KAM equation on the lattice $\Lambda_N$~\eqref{weak_KAM:eq:weak_KAM_N}.
	
	Now let us prove the uniqueness of the number $\overline{\mathcal{H}}_N$. Assume that there exist two solutions of~\eqref{weak_KAM:eq:weak_KAM_N}, namely, $(\varphi_N'(\cdot),\overline{\mathcal{H}}_N')$, $(\varphi_N''(\cdot),\overline{\mathcal{H}}_N'')$. 
	
	We choose a feedback strategy $\pi'_{N}$  such that 
	\begin{equation}\label{approx_weak_KAM:intro:pi_N}
		\pi_N'(x)\in\underset{v\in \rd}{\operatorname{Argmax}}\Big[(-\Delta_{N})\varphi'_{N}(x)\cdot v-L(x,v)\Big].
	\end{equation} 
Let $z\in\Lambda_N$ and let  $(\Omega,\mathcal{F},\{\mathcal{F}_{t}\}_{t\in [0,+\infty)},\mathbb{P},X')$ be a motion produced by the strategy $\pi'_N$ and the initial state $z$. For each $T>0$, we have that 
	\[\begin{split}
		\mathbb{E}[\varphi'(&X'(T))-\varphi'(z)]=
		\mathbb{E}\int_0^T\Delta_{N}\varphi_N'(X_t')dt\\&=
		\mathbb{E}\int_0^T\Big[\Delta_{N}\varphi_N'(X_t')+L(X_t',\pi'_N(X_t'))\Big]dt-\mathbb{E}\int_0^TL(X_t',\pi'_N(X_t'))dt.
	\end{split}\] Due to the choice of the strategy $\pi'_N$ (see \eqref{approx_weak_KAM:intro:pi_N}) and the fact that $(\varphi_N',\overline{\mathcal{H}}'_N)$ is a solution of the weak KAM solution, we have that 
	\begin{equation}\label{approx_weak_KAM:equality:E_varphi_prime}
		\mathbb{E}[\varphi'(z)-\varphi'(X'(T))]= T\overline{\mathcal{H}}'_N+\mathbb{E}\int_0^TL(X_t',\pi'_N(X_t'))dt.
	\end{equation} Simultaneously, 
	\[\begin{split}
		\mathbb{E}[\varphi''(z&)-\varphi''(X'(T))]\\&=
		-\mathbb{E}\int_0^T\Big[\Delta_{N}\varphi_N''(X_t')+L(X_t',\pi'_N(X_t'))\Big]dt+\mathbb{E}\int_0^TL(X_t',\pi'_N(X_t'))dt.
	\end{split}\] Since, for each $x\in\Lambda_N$,
	\[\begin{split}
	\Big[(-\Delta_{N})\varphi_N''(x)-L(x,\pi'_N(x))\Big]&\leq \max_{v\in\rd}\Big[(-\Delta_{N})\varphi_N''(x)-L(x,v)\Big]\\&=\mathcal{H}_N(x,(-\Delta_{N})\varphi_N''(x))=\overline{\mathcal{H}}_N'',\end{split}\] we have that 
	\[\mathbb{E}[\varphi''(z)-\varphi''(X'(T))]\leq T\overline{\mathcal{H}}_N''+\mathbb{E}\int_0^TL(X_t',\pi'_N(X_t'))dt.\] This and \eqref{approx_weak_KAM:equality:E_varphi_prime} yield that 
	\[
	T\overline{\mathcal{H}}_N''-\mathbb{E}[\varphi''(z)-\varphi''(X'(T))]\geq T\overline{\mathcal{H}}_N'-\mathbb{E}[\varphi'(x_0)-\varphi'(X'(T))].
	\] Since that functions $\varphi_N'$ and $\varphi_N''$ are bounded on $\Lambda_N$, dividing both parts of this inequality by $T$ and passing to the limit when $T\rightarrow\infty$, we conclude that 
	\[\overline{\mathcal{H}}_N''\geq \overline{\mathcal{H}}_N'.\] The opposite inequality in proved in the same way. Thus, we conclude that $\overline{\mathcal{H}}_N'=\overline{\mathcal{H}}_N''$.
\end{proof}

\section{Limit  weak KAM equations on  lattices}\label{sect:approx_weak_KAM}
\subsection{Limit of effective Hamiltonians}
In this section, we prove the convergence result for the sequence $\{\overline{\mathcal{H}}_N\}_{N=1}^\infty$.
\begin{theorem}\label{main_res:th:approx_weak_KAM} Let $(u,\bar{H})$ satisfy weak KAM equation for the continuous phase space \eqref{intro:eq:weak_KAM}, $N$ be a natural number, and let $(\varphi_N,\overline{\mathcal{H}}_N)$ solve  weak KAM equation on the lattice $\Lambda_N$~\eqref{weak_KAM:eq:weak_KAM_N}. Then, 
	\[|\bar{H}-\overline{\mathcal{H}}_N|\leq C_2N^{-1/2},\] where $C_2$ is the constant determined only by $L$. 
\end{theorem}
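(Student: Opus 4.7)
My plan is to split the conclusion into the two one-sided estimates $\overline{\mathcal{H}}_N-\bar{H}\leq CN^{-1/2}$ and $\bar{H}-\overline{\mathcal{H}}_N\leq CN^{-1/2}$, proving each by comparing the two weak KAM characterizations through the coupling of Lemma~\ref{markov:lm:distance}. As a preliminary I would extract a bounded stationary optimal feedback $\pi_N^*$ with $|\pi_N^*|\leq c_5$ for the lattice equation \eqref{weak_KAM:eq:weak_KAM_N}---by passing to the limit in Proposition~\ref{lambda:prop:existence} along the subsequence $\lambda_k\to 0$ used in the proof of Theorem~\ref{main_res:th:existence_weak_KAM} and reusing the argument of Lemma~\ref{rate_lambda:lm:policy_bound}---and, on the continuous side, I would recall that for each $T>0$ and $z\in\td$ there is a calibrated absolutely continuous curve $(x(\cdot),v(\cdot))$ starting at $z$ with $|v|\leq c_2$ (Tonelli, together with Lemma~\ref{rate_lambda:lm:v_opt_boundness}). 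I would also normalize $u$ and $\varphi_N$ so that both vanish at a common fixed base point, so that the Lipschitz bounds from Lemmas~\ref{rate_lambda:lm:HJ_derivative} and \ref{rate_lambda:lm:H_N_derivaitive} give uniform $L^\infty$-bounds on each, depending only on $L$.

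For the upper bound, I would fix $T>0$ and $z\in\Lambda_N$ and apply Lemma~\ref{markov:lm:distance} with $\pi=\pi_N^*$ and $x_*^1=x_*^2=z$ to couple an absolutely continuous $X^1$ (driven by $\pi_N^*(X^2)$) with the Markov chain $X^2$. The lattice weak KAM equation, evaluated at its equality (optimal) case, gives
\[
\overline{\mathcal{H}}_N T=\varphi_N(z)-\mathbb{E}\varphi_N(X^2_T)-\mathbb{E}\int_0^T L(X^2_t,\pi_N^*(X^2_t))\,dt,
\]
while the continuous weak KAM applied pathwise to $X^1$ gives
\[
\bar{H}T\geq u(z)-\mathbb{E} u(X^1_T)-\mathbb{E}\int_0^T L(X^1_t,\pi_N^*(X^2_t))\,dt.
\]
Subtracting these, bounding the boundary terms by the $L^\infty$-bounds above, and controlling the running Lagrangian discrepancy by $K(c_5)\,\mathbb{E}\int_0^T|X^1_t-X^2_t|\,dt$ via \eqref{prel:intro:K_c} and Lemma~\ref{markov:lm:distance}, leads to an estimate of the form $(\overline{\mathcal{H}}_N-\bar{H})T\leq C+\Psi(T,N)$. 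The symmetric bound $\bar{H}-\overline{\mathcal{H}}_N\leq CN^{-1/2}$ is obtained analogously, now inserting the continuous calibrated velocity $v$ as an open-loop control into the lattice Markov chain and applying the lattice weak KAM in its non-optimal (inequality) direction together with the continuous calibration identity $\int_0^T L(x(t),v(t))\,dt=u(z)-u(x(T))-\bar{H}T$.

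The dominant question is the quality of $\Psi(T,N)$. The naive pathwise estimate $\mathbb{E}|X^1_t-X^2_t|\leq C(t/N)^{1/2}$ from Lemma~\ref{markov:lm:distance} alone gives $\Psi(T,N)=CT^{3/2}N^{-1/2}$, which after optimizing $T$ only produces the suboptimal rate $N^{-1/3}$. To reach $N^{-1/2}$ one exploits the centering $\mathbb{E}[X^1_t-X^2_t]=0$ inherited from the construction in Lemma~\ref{markov:lm:distance}: in the second-order expansion of $L(X^2_t,v)$ in $x$ about the deterministic (open-loop case) or conditionally deterministic (optimal case) point $X^1_t$, the linear term has zero expectation and only the quadratic remainder survives, reducing the per-time error to $O(t/N)$ and hence the integrated error to $O(T^2/N)$. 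The optimal balance is then $T\sim N^{1/2}$, yielding the claimed $C_2 N^{-1/2}$ with $C_2$ depending only on $L$. Making this second-order step rigorous under assumption \ref{prel:assumption:c} on $L_x$ and the uniform velocity bounds $c_2,c_5$, using the compactness of $\td$ to turn continuity into an effective integrable control, is the main technical obstacle.
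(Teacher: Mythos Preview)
Your two one-sided inequalities and the use of the coupling from Lemma~\ref{markov:lm:distance} are the right overall shape, and the identities you write for $\overline{\mathcal{H}}_N T$ and $\bar{H}T$ are exactly the ones the paper uses. The genuine gap is in how you dispose of the boundary terms. Your plan is to normalize $u$ and $\varphi_N$ at a common base point, bound the four boundary values by absolute constants, and optimize in $T$; as you note, the first-order estimate then gives only $N^{-1/3}$. The proposed repair---use the centering $\mathbb{E}[\widetilde{X}^1_t-\widetilde{X}^2_t]=0$ and a second-order expansion of $L$ in $x$---does not work under the standing hypotheses. Assumption~\ref{prel:assumption:c} gives only continuity of $L_x$, not Lipschitz continuity, so the remainder after killing the linear term is merely $o(|X^1_t-X^2_t|)$ rather than $O(|X^1_t-X^2_t|^2)$, and you cannot upgrade the per-time error from $O((t/N)^{1/2})$ to $O(t/N)$. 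Moreover, in the feedback half the factor $L_x(X^1_t,\pi_N^*(X^2_t))$ is $\mathcal{F}_t$-measurable and correlated with $X^1_t-X^2_t$, so the expectation of the linear term has no reason to vanish; ``conditionally deterministic'' does not rescue this.

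The paper bypasses all of this with a different and simpler device (attributed to Soga): work on the fixed interval $[0,1]$ but choose the initial point as an extremizer of $\varphi_N-u$ on $\Lambda_N$. For the bound $\overline{\mathcal{H}}_N-\bar{H}\leq CN^{-1/2}$, take $z_\natural\in\Lambda_N$ with $\varphi_N(z_\natural)-u(z_\natural)\leq\varphi_N(x)-u(x)$ for all $x\in\Lambda_N$. Subtracting your two displayed relations at $T=1$ and rearranging gives
\[
\overline{\mathcal{H}}_N-\bar{H}\leq \mathbb{E}\big[(\varphi_N(z_\natural)-u(z_\natural))-(\varphi_N(X^2_1)-u(X^2_1))\big]+\mathbb{E}\big[u(X^1_1)-u(X^2_1)\big]-\mathbb{E}\!\int_0^1\!\big[L(X^2_t,\cdot)-L(X^1_t,\cdot)\big]dt,
\]
and the first expectation is $\leq 0$ by the choice of $z_\natural$ (since $X^2_1\in\Lambda_N$). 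The remaining two terms are bounded by $c_1\,\mathbb{E}|X^1_1-X^2_1|$ and $K(c_5)\int_0^1\mathbb{E}|X^1_t-X^2_t|\,dt$, each of order $N^{-1/2}$ directly from Lemma~\ref{markov:lm:distance}. The opposite inequality is symmetric, starting from the maximizer $z^\natural$ of $\varphi_N-u$ and a calibrated curve for $u$ there. No optimization in $T$ and no second-order expansion is needed.
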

The proof of this theorem  relies on several auxiliary statements.
\begin{lemma}\label{approx_weak_KAM:lm:u} Let $(u,\bar{H})$ solve \eqref{intro:eq:weak_KAM}. Then,
	\begin{enumerate}
		\item $|\bar{H}|\leq c_0$;
		\item $|u(x)-u(y)|\leq c_1|x-y|$.
	\end{enumerate}
\end{lemma}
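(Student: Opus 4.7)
The plan is to deduce both bounds by passing to the limit in the corresponding estimates for the solutions $u_\lambda$ of the discounting Hamilton-Jacobi equation \eqref{intro:eq:lambda_HJ}, exploiting the fact (recalled after the definition of viscosity solutions in Section~\ref{sect:prel}, with reference to~\cite{lions_et_al}) that there exists a sequence $\lambda_k\downarrow 0$ and a point $z\in\td$ such that
\[
u_{\lambda_k}(x)-u_{\lambda_k}(z)\longrightarrow u(x),\qquad -\lambda_k u_{\lambda_k}(x)\longrightarrow \bar H
\]
uniformly on $\td$, where $(u,\bar H)$ is a viscosity solution of the weak KAM equation~\eqref{intro:eq:weak_KAM}.

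For the first assertion, I would apply the first item of Lemma~\ref{rate_lambda:lm:HJ_derivative} to each $u_{\lambda_k}$: it yields $|\lambda_k u_{\lambda_k}(x)|\le c_0$ uniformly in $x\in\td$ and $k$. Since $-\lambda_k u_{\lambda_k}(x)\to \bar H$ (in particular at $x=z$), passing to the limit gives $|\bar H|\le c_0$.

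For the second assertion, I would apply the second item of Lemma~\ref{rate_lambda:lm:HJ_derivative}, which provides a $\lambda$-independent Lipschitz constant: $|u_{\lambda_k}(x)-u_{\lambda_k}(y)|\le c_1|x-y|$ for every $x,y\in\td$. Subtracting $u_{\lambda_k}(z)$ from both entries does not affect the estimate, so
\[
\bigl|(u_{\lambda_k}(x)-u_{\lambda_k}(z))-(u_{\lambda_k}(y)-u_{\lambda_k}(z))\bigr|\le c_1|x-y|.
\]
Taking $k\to\infty$ and using the uniform convergence to $u$ yields $|u(x)-u(y)|\le c_1|x-y|$.

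I do not expect any genuine obstacle here: the whole argument is a one-line limit passage once the uniform-in-$\lambda$ bounds of Lemma~\ref{rate_lambda:lm:HJ_derivative} are in hand. The only point requiring minimal care is to remember that the limiting pair $(u,\bar H)$ is obtained precisely from the normalized family $u_\lambda(\cdot)-u_\lambda(z)$ together with $-\lambda u_\lambda(\cdot)$, so that the Lipschitz estimate transfers verbatim and the sup-bound $|\lambda u_\lambda|\le c_0$ transfers to $|\bar H|\le c_0$ independently of the normalization point $z$.
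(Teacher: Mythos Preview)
Your argument for the first item coincides with the paper's: both use that $\bar H=-\lim_{k\to\infty}\lambda_k u_{\lambda_k}(x)$ together with the bound $|\lambda_k u_{\lambda_k}|\le c_0$ from Lemma~\ref{rate_lambda:lm:HJ_derivative}. This is fine because the effective Hamiltonian $\bar H$ is unique, so whatever solution $(u,\bar H)$ one starts from, the number $\bar H$ is necessarily this limit.

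For the second item your approach differs from the paper's, and here there is a genuine gap. You assume that the given $u$ is obtained as a uniform limit of $u_{\lambda_k}(\cdot)-u_{\lambda_k}(z)$; but the lemma is stated for an \emph{arbitrary} viscosity solution $(u,\bar H)$ of~\eqref{intro:eq:weak_KAM}, and the functional part $u$ is not unique. The result from~\cite{lions_et_al} recalled in Section~\ref{sect:prel} only says that \emph{some} subsequence of the normalized $u_\lambda$ converges to \emph{some} solution of the weak KAM equation; it does not say that every solution arises this way. Hence the limit passage you propose does not cover all $u$ in the statement. The paper avoids this issue by not passing to the limit at all for item~2: it remarks that the Lipschitz bound ``is proved in the same way as the second statement of Lemma~\ref{rate_lambda:lm:HJ_derivative}'', i.e.\ one repeats the direct viscosity argument with the test function $\phi(y)=c_1|y-x|$, now using $|\bar H|\le c_0$ in place of $|\lambda u_\lambda|\le c_0$. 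That argument works for any viscosity solution of~\eqref{intro:eq:weak_KAM}.
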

\begin{proof}
	Recall that, in~\cite{lions_et_al}, it is shown that 
	\[\bar{H}=-\lim_{k\rightarrow\infty}\lambda_ku_{\lambda_k}(x)\] for some sequence $\{\lambda_k\}_{k=1}^\infty$ converging to zero and every $x\in\td$.  This and the the first statement of Lemma \ref{rate_lambda:lm:HJ_derivative} give that $|\bar{H}|\leq c_0$.
	
	The estimate $|u(x)-u(y)|\leq c_1|x-y|$ is proved in the same way as the second statement of Lemma \ref{rate_lambda:lm:HJ_derivative}.
\end{proof}

\begin{lemma}\label{approx_weak_KAM:lm:v_bound} There exists a constant $c_7$ determined only by the Lagrangian such that, if 
	\begin{itemize}
		\item $(u(\cdot),\bar{H})$ is a solution of weak KAM equation~\eqref{intro:eq:weak_KAM},
		\item $T>0$,
		\item	$v(\cdot)\in L^1([0,T];\rd)$ ,
		\item $x(\cdot)\in C([0,T];\rd)$,
	\end{itemize}
	satisfy \[x(t)=x(0)+\int_{0}^tv(s)ds,\ \ u(x_0)-u(x(T))=\int_{0}^TL(x(t),v(t))dt+\bar{H}T,\] 
	then, for a.e. $r\in [0,T]$,
	\[|v(t)|\leq c_6.\]
\end{lemma}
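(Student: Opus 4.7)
My plan is to mirror the strategy of Lemma~\ref{rate_lambda:lm:v_opt_boundness}, adapted to the weak KAM setting. The key ingredients are: the Lipschitz estimate on $u$ (Lemma~\ref{approx_weak_KAM:lm:u}), the bound $|\bar H|\le c_0$, the superlinear growth of $L$, and a local calibration consequence of the Lax--Oleinik definition of a weak KAM solution.

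First, I would upgrade the global equality $u(x_0)-u(x(T))=\int_0^T L(x,v)\,dt+\bar HT$ to a \emph{local} equality on every subinterval $[s,t]\subset[0,T]$. Indeed, from the Lax--Oleinik characterization recalled in the Introduction,
\[
u(x(s))-u(x(t))\leq \int_s^t L(x(\sigma),v(\sigma))\,d\sigma+\bar H(t-s)
\]
for any admissible curve on $[s,t]$. Applying this on $[0,s]$ and on $[s,T]$ and adding, then comparing with the given global equality, each inequality must in fact be an equality. Hence
\begin{equation*}
u(x(s))-u(x(t))=\int_s^t L(x(\sigma),v(\sigma))\,d\sigma+\bar H(t-s) \quad\text{for all }0\le s\le t\le T.
\end{equation*}

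Next, fix a Lebesgue point $\tau\in[0,T)$ of $v(\cdot)$ and pick $r>0$ with $\tau+r\le T$. Using Lemma~\ref{approx_weak_KAM:lm:u} together with $x(\tau+r)-x(\tau)=\int_\tau^{\tau+r}v(s)\,ds$, and the bound $|\bar H|\leq c_0$,
\[
\int_\tau^{\tau+r}\!\!L(x(s),v(s))\,ds
= u(x(\tau))-u(x(\tau+r))-\bar H r
\leq c_1\!\int_\tau^{\tau+r}\!\!|v(s)|\,ds + c_0 r.
\]
On the other hand, by superlinearity \ref{prel:assumption:super_linear} with $a=c_1+1$,
\[
\int_\tau^{\tau+r}\!\!L(x(s),v(s))\,ds \geq (c_1+1)\!\int_\tau^{\tau+r}\!\!|v(s)|\,ds + g(c_1+1)\,r.
\]
Subtracting,
\[
\int_\tau^{\tau+r}|v(s)|\,ds \leq \bigl(c_0-g(c_1+1)\bigr)r.
\]

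Finally, dividing by $r$ and letting $r\downarrow 0$ at the Lebesgue point $\tau$ yields $|v(\tau)|\le c_0-g(c_1+1)$. Taking $c_6\triangleq c_0-g(c_1+1)$, which depends only on $L$, and noting that almost every point of $[0,T]$ is a Lebesgue point of $v(\cdot)$, the conclusion follows. I do not expect a real obstacle here; the only step requiring care is the upgrade from the global to the local calibration identity, since everything else is a direct transcription of the discounted argument with $\lambda u_\lambda$ replaced by $-\bar H$ and without the need to restrict $r$ by $\lambda^{-1}\ln 2$.
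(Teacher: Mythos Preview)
Your proof is correct and follows essentially the same route as the paper's own argument: localize the calibration identity to subintervals via the Lax--Oleinik inequality (what the paper calls ``the dynamic programming principle''), then combine the Lipschitz bound from Lemma~\ref{approx_weak_KAM:lm:u} with superlinearity at $a=c_1+1$ to obtain $\int_\tau^{\tau+r}|v|\,ds\le (c_0-g(c_1+1))r$ and conclude at Lebesgue points. Your explicit justification of the global-to-local step is, if anything, a bit more careful than the paper's sketch.
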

\begin{proof} We argues as in the proof of Lemma \ref{rate_lambda:lm:v_opt_boundness}.
	Let $\tau$ be a Lebesgue point for the function $v(\cdot)$. From the dynamic programming principle, we have that, for $t\in (0,T-\tau]$ 
	\[u(x(\tau))-u(x(\tau+r))=\int_\tau^{\tau+r}L(x(t),v(t))dt+\bar{H}r.\]
	Using the second statement of the Lemma \ref{approx_weak_KAM:lm:u} and assumption \ref{prel:assumption:super_linear} for $a=c_1+1$, we have that 
	\[c_1\int_\tau^{\tau+r}|v(t)|dt\geq (c_1+1)\int_\tau^{\tau+r}|v(t)|dt+(g(c_1+1)+\bar{H})r.\] Therefore,
	\[\int_\tau^{\tau+r}|v(t)|dt\leq -(g(c_1+1)+\bar{H})r.\] Since $\tau$ is a Lebesgue point for $v(\cdot)$, we obtain the conclusion of the lemma with $c_6=-g(c_1+1)+c_0$.
\end{proof}

\begin{lemma}\label{approx_weak_KAM:lm:H_N_derivaitive} Let $(\varphi_{N},\overline{\mathcal{H}}_N)$ be a solution of \eqref{weak_KAM:eq:weak_KAM_N}. Then, for each $x,y\in\Lambda_N$, $i\in \{1,\ldots,d\}$,
	\begin{itemize}
		\item $\lambda|\varphi_{N,\lambda}(x)|\leq c_0$ for each $x\in \Lambda_N$;
		\item $|\Delta_{N,i}^+\varphi_{N,\lambda}(x)|,\, |\Delta_{N,i}^+\varphi_{N,\lambda}(x)|\leq c_3$;
		\item $|\varphi_{N,\lambda}(x)-\varphi_{N,\lambda}(y)|\leq c_4|x-y|$.
	\end{itemize}
	Here, $c_0$ is the same constant as in Lemma \ref{rate_lambda:lm:HJ_derivative}, whereas $c_3$, $c_4$ are determined in Lemma~\ref{rate_lambda:lm:H_N_derivaitive}.
\end{lemma}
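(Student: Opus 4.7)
The plan is to mimic the proof of Lemma \ref{rate_lambda:lm:H_N_derivaitive} line by line, with the discounting term $\lambda\varphi_{N,\lambda}(x)$ replaced by the constant $-\overline{\mathcal{H}}_N$; accordingly, the three listed bounds should be read as $|\overline{\mathcal{H}}_N|\leq c_0$, $|\Delta_{N,i}^\pm\varphi_N(x)|\leq c_3$, and $|\varphi_N(x)-\varphi_N(y)|\leq c_4|x-y|$.

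The first step is to establish $|\overline{\mathcal{H}}_N|\leq c_0$. The most efficient route is to exploit the existence proof of Theorem \ref{main_res:th:existence_weak_KAM}: there $\overline{\mathcal{H}}_N$ appears as the uniform limit on $\Lambda_N$ of $-\lambda_k\varphi_{N,\lambda_k}$ along a suitable sequence $\lambda_k\downarrow 0$. Since the first item of Lemma \ref{rate_lambda:lm:H_N_derivaitive} provides $|\lambda_k\varphi_{N,\lambda_k}(x)|\leq c_0$ for every $k$ and $x$, the bound is inherited in the limit, and the uniqueness of $\overline{\mathcal{H}}_N$ from the same theorem makes this conclusion unambiguous.

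With this bound available, the weak KAM equation \eqref{weak_KAM:eq:weak_KAM_N} reduces to
\[
\sup_{v\in\rd}\big[(-\Delta_{N})\varphi_N(x)\cdot v-L(x,v)\big]=\overline{\mathcal{H}}_N\leq c_0,\qquad x\in\Lambda_N,
\]
and the finite-difference bound follows exactly as in Lemma \ref{rate_lambda:lm:H_N_derivaitive}: testing the supremum with $v=e_i$ in the case $\Delta_{N,i}^+\varphi_N(x)\leq 0$ yields $\Delta_{N,i}^+\varphi_N(x)\geq -(c_0+L(x,e_i))$, testing with $v=-e_i$ in the case $\Delta_{N,i}^-\varphi_N(x)\geq 0$ yields $\Delta_{N,i}^-\varphi_N(x)\leq c_0+L(x,-e_i)$, and both partial bounds are upgraded to two-sided bounds through the identities \eqref{markov:equality:Delta_shift}. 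The Lipschitz estimate is then a telescoping argument along an axis-aligned lattice path joining $x$ to $y$: each unit step contributes at most $h\cdot c_3$ to $|\varphi_N(x)-\varphi_N(y)|$, and the Cauchy--Schwarz inequality on the coordinate differences delivers the constant $c_4=c_3\sqrt{d}$.

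The only nontrivial step is the first one, because the analog of the pointwise bound $\lambda|\varphi_{N,\lambda}|\leq c_0$ is not built into \eqref{weak_KAM:eq:weak_KAM_N} in any direct way and must be imported through the vanishing-discount procedure of Theorem \ref{main_res:th:existence_weak_KAM}; the remainder of the argument is a verbatim transfer from Lemma \ref{rate_lambda:lm:H_N_derivaitive}.
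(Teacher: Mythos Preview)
Your proposal is correct and matches the paper's intent: the paper's own proof is the single sentence ``The proof of this lemma mimics the proof of Lemma~\ref{rate_lambda:lm:H_N_derivaitive},'' and your write-up is precisely such a mimicry, including the correct reinterpretation of the (mis-stated) first item as $|\overline{\mathcal{H}}_N|\leq c_0$. Your choice to extract this bound from the vanishing-discount construction in Theorem~\ref{main_res:th:existence_weak_KAM} is also exactly how the paper handles the continuous analog in Lemma~\ref{approx_weak_KAM:lm:u}, so there is no methodological divergence.
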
 The proof of this lemma mimics the proof of Lemma~\ref{rate_lambda:lm:H_N_derivaitive}.

Finally, let us evaluate the norm of a feedback strategy determined by a solution of the weak KAM equation on the lattice $\Lambda_N$.
\begin{lemma}\label{approx_weak_KAM:lm:policy_bound} Let $(\varphi_{N},\overline{\mathcal{H}}_N)$ be a solution of weak KAM equation on the lattice $\Lambda_N$~\eqref{weak_KAM:eq:weak_KAM_N} and let $\pi_{N}^*$ be such that \begin{equation}\label{approx_weak_KAM:intro:pi_N_star}
		\pi_N^*(x)\in\underset{v\in \rd}{\operatorname{Argmax}}\Big[(-\Delta_{N})\varphi_{N}(x)\cdot v-L(x,v)\Big],
	\end{equation} then
	\[|\pi_{N}^*(x)|\leq c_5\text{ for each }x\in\Lambda_N.\]
\end{lemma}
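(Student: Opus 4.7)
The plan is to mirror the argument of Lemma~\ref{rate_lambda:lm:policy_bound}, with the role of $\lambda\varphi_{N,\lambda}$ taken over by the constant $\overline{\mathcal{H}}_N$. First I would read off from the weak KAM equation on the lattice~\eqref{weak_KAM:eq:weak_KAM_N} and the definition of $\pi_N^*$ in~\eqref{approx_weak_KAM:intro:pi_N_star} that
\[
(-\Delta_{N})\varphi_N(x)\cdot \pi_N^*(x)-L(x,\pi_N^*(x))=\mathcal{H}_N(x,(-\Delta_N)\varphi_N(x))=\overline{\mathcal{H}}_N,
\]
so the issue reduces to combining a uniform bound on $\overline{\mathcal{H}}_N$ with a uniform bound on the finite differences of $\varphi_N$.

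For the bound on $\overline{\mathcal{H}}_N$, I would invoke the construction used in the proof of Theorem~\ref{main_res:th:existence_weak_KAM}: there $\overline{\mathcal{H}}_N$ appears as the uniform limit $-\lim_k\lambda_k\varphi_{N,\lambda_k}$ along a sequence $\lambda_k\to 0$, and by~\eqref{approx_weak_KAM:ineq:varphi_N_lambda} (which is simply the first statement of Lemma~\ref{rate_lambda:lm:H_N_derivaitive}) one has $|\lambda\varphi_{N,\lambda}|\le c_0$ pointwise on $\Lambda_N$. Passing to the limit gives $|\overline{\mathcal{H}}_N|\le c_0$. For the finite differences, Lemma~\ref{approx_weak_KAM:lm:H_N_derivaitive} supplies $|\Delta_{N,i}^\pm\varphi_N(x)|\le c_3$, hence by the H\"older inequality
\[
(-\Delta_N)\varphi_N(x)\cdot v\le\sum_{i=1}^{d}\bigl[|\Delta_{N,i}^+\varphi_N(x)|\,v_i^++|\Delta_{N,i}^-\varphi_N(x)|\,v_i^-\bigr]\le \sqrt{d}\,c_3\,|v|
\]
for every $v\in\rd$.

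Rewriting the displayed equality as $L(x,\pi_N^*(x))=(-\Delta_N)\varphi_N(x)\cdot\pi_N^*(x)-\overline{\mathcal{H}}_N$ and combining the two bounds yields
\[
L(x,\pi_N^*(x))\le \sqrt{d}\,c_3\,|\pi_N^*(x)|+c_0.
\]
Then I would apply the superlinear growth condition~\ref{prel:assumption:super_linear} with $a=\sqrt{d}\,c_3+1$ to the left-hand side, getting $L(x,\pi_N^*(x))\ge(\sqrt{d}\,c_3+1)|\pi_N^*(x)|+g(\sqrt{d}\,c_3+1)$. Subtracting the linear-in-$|\pi_N^*(x)|$ terms leaves $|\pi_N^*(x)|\le c_0-g(\sqrt{d}\,c_3+1)$, which is exactly the constant $c_5$ introduced in Lemma~\ref{rate_lambda:lm:policy_bound}.

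There is really no technical obstacle here: the only delicate point is remembering that the analogue of the elementary bound $|\lambda\varphi_{N,\lambda}|\le c_0$ for the lattice effective Hamiltonian is not stated as a separate lemma but is automatic from the limit construction of $\overline{\mathcal{H}}_N$ in Theorem~\ref{main_res:th:existence_weak_KAM}. Once that bound is in hand, the estimate on $|\pi_N^*(x)|$ is literally the computation of Lemma~\ref{rate_lambda:lm:policy_bound} with $\overline{\mathcal{H}}_N$ in place of $-\lambda\varphi_{N,\lambda}(x)$, so the same constant $c_5$ is produced and the proof closes.
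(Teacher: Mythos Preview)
Your proposal is correct and follows precisely the route the paper intends: it says the proof ``literally follows the proof of Lemma~\ref{rate_lambda:lm:policy_bound}, where we utilize the results of Lemma~\ref{approx_weak_KAM:lm:H_N_derivaitive} instead of Lemma~\ref{rate_lambda:lm:H_N_derivaitive}.'' The only cosmetic difference is that you obtain the bound $|\overline{\mathcal{H}}_N|\le c_0$ directly from the limit construction in Theorem~\ref{main_res:th:existence_weak_KAM}, whereas the paper would cite it as the first item of Lemma~\ref{approx_weak_KAM:lm:H_N_derivaitive} (which, as stated, carries an obvious copy-paste typo); either way the same inequality $|\pi_N^*(x)|\le c_0-g(\sqrt{d}\,c_3+1)=c_5$ drops out.
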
 The proof  literally follows the proof of Lemma~\ref{rate_lambda:lm:policy_bound}, where we utilize the results of Lemma~\ref{approx_weak_KAM:lm:H_N_derivaitive} instead of Lemma~\ref{rate_lambda:lm:H_N_derivaitive}.

\begin{proof}[Proof of Theorem~\ref{main_res:th:approx_weak_KAM}]
We use the trick apparently first proposed in~\cite{Soga2021}.

	Let $z_\natural\in\Lambda_N$ be such that 
	\begin{equation}\label{approx_weak_KAM:ineq:z_max}
		\varphi_{N}(z_\natural)-u(z_\natural)\leq \varphi_{N}(x)-u(x)\text{ for each }x\in\Lambda_N.\end{equation}
	
	Furthermore, let $\pi_{N}^*$ be chosen by rule~\eqref{approx_weak_KAM:intro:pi_N}.
	 Applying Lemma~\ref{markov:lm:distance} with $\mathcal{I}=[0,1]$, $x_*^1=x_*^2=z_\natural$ and $\pi(t,x)=\pi_N^*(x)$, we obtain a filtered probability space $(\Omega,\mathcal{F},\{\mathcal{F}_t\}_{t\in [0,T]},\mathbb{P})$ and a stochastic process $(X^1,X^2)$ defined on it that satisfies conditions~\eqref{markov:eq:tilde_X_ini},~\eqref{markov:intro:X}. Therefore,
	\begin{equation*}\label{approx_weak_KAM:equality:varphi_L}
		\begin{split}
			\mathbb{E}\Big[\varphi_N&(X^2_1)-\varphi_N(z_\natural)\Big]\\&=
			\mathbb{E}\int_{0}^1\Big[\Delta_{N}\varphi_N(X^2_t)\pi_N^*(X^2(t))+L(X^2_t,\pi_N^*(X^2_t))\Big]dt\\&{}\hspace{148pt}-\mathbb{E}\int_{0}^1L(X^2_t,\pi^*_N(X^2_t))dt\\&=
			-\overline{\mathcal{H}}_N-\mathbb{E}\int_{0}^1L(X^2_t,\pi^*_N(X^2_t))dt.
		\end{split}
	\end{equation*} Analogously, due to the fact that $(u,\bar{H})$ solves \eqref{intro:eq:weak_KAM} and the characterization of this solution as a fixed point of Lax-Oleinik operator, we have that 
	\begin{equation*}\label{approx_weak_KAM:ineq:u_L}
		\mathbb{E}\Big[u(z_\natural)-u(X^1_1)\Big]\leq 
		\bar{H}+\mathbb{E}\int_{0}^1L(X^1_t,\pi^*_N(X^2_t))dt.
	\end{equation*} 
Therefore,
\begin{equation*}
\begin{split}
	\overline{\mathcal{H}}_N-\bar{H}\leq \mathbb{E}\big[(\varphi_{N}(&z_\natural)-u(z_\natural))-(\varphi_{N}(X_1^2)-u(X_1^1))\big]\\&-\mathbb{E}\int_0^1 \big[L(X^2_t,\pi_{N}^*(X_t^2))-L(X^1_t,\pi_{N}^*(X_t^2))\big]dt\\ =
	\mathbb{E}\big[(\varphi_{N}(&z_\natural)-u(z_\natural))-(\varphi_{N}(X_1^2)-u(X_1^2))\big]+\mathbb{E}(u(X^1_1)-u(X^2_1))\\&-\mathbb{E}\int_0^1 \big[L(X^2_t,\pi_{N}^*(X_t^2))-L(X^1_t,\pi_{N}^*(X_t^2))\big]dt.
\end{split}
\end{equation*} Due to the choice of $z_\natural$ (see~\eqref{approx_weak_KAM:ineq:z_max}), we have that 
\begin{equation*}\label{approx_weak_KAM:ineq:Hs}
	\overline{\mathcal{H}}_N-\bar{H}\leq \mathbb{E}(u(X^1_1)-u(X^2_1))-\mathbb{E}\int_0^1 \big[L(X^2_t,\pi_{N}^*(X_t^2))-L(X^1_t,\pi_{N}^*(X_t^2))\big]dt.
\end{equation*}

Recall that $|\pi_N^*(x)|\leq c_5$ (this is due to Lemma~\ref{approx_weak_KAM:lm:policy_bound}). Thus, 
Lemmas~\ref{markov:lm:distance},~\ref{approx_weak_KAM:lm:u} and the Jensen's inequality give that 
\[\mathbb{E}(u(X^1_1)-u(X^2_1))\leq C_5'N^{-1/2},\] where $C_5'\triangleq c_1d^{1/4}(c_5)^{1/2}$.

Analogously, Lemma~\ref{markov:lm:distance}, assumption~\ref{prel:assumption:c} and the Jensen's inequality yield that 
	\[\mathbb{E}|L(X^1_t,\pi^*_N(X^2_t))-L(X^2_t,\pi^*_N(X^2_t))|\leq K(c_5)\cdot\mathbb{E}|X^1_t-X^2_t|\leq C_6'N^{-1/2}t^{1/2},\] where the function $K$ is introduced by~\eqref{prel:intro:K_c},  and \[C_6'\triangleq d^{1/4}(c_5)^{1/2}\cdot K(c_5).\]
	This, equality~\eqref{approx_weak_KAM:equality:varphi_L} and inequality~\eqref{approx_weak_KAM:ineq:u_L} imply the  estimate
	\begin{equation}\label{approx_weak_KAM:ineq:H_H_N}
		\overline{\mathcal{H}}_N-\bar{H} \leq  C_7'N^{-1/2},
	\end{equation} for $C_7'\triangleq \frac{3}{2}C_6'+C_5'$. 
	
	To derive the opposite inequality, we first choose $z^\natural$ such that 
		\begin{equation}\label{approx_weak_KAM:ineq:z_min}
		\varphi_{N}(z^\natural)-u(z^\natural)\geq \varphi_{N}(x)-u(x)\text{ for each }x\in\Lambda_N.
	\end{equation}
	
	There exist  functions $v(\cdot)\in L^1([0,1];\rd)$ and $x(\cdot)\in C([0,T];\rd)$ such that 
	\[x(t)=z^\natural+\int_{0}^t v(s)ds\] and 
	\begin{equation}\label{approx_weak_KAM:equality:u_L_calibrated}
		u(z^\natural)-u(x(1))=\int_{0}^1 L(x(t),v(t))+\bar{H}.
	\end{equation} 
	Lemma \ref{approx_weak_KAM:lm:v_bound} says that $|v(t)|\leq c_6$.
	
	Letting $\pi(t,x)\triangleq v(t)$,  we construct a filtered probability space $(\Omega,\mathcal{F},\{\mathcal{F}_t\}_{t\in [0,T]},\mathbb{P})$ and a stochastic process $(X^1,X^2)$ defined on it that satisfies conditions~\eqref{markov:eq:tilde_X_ini},~\eqref{markov:intro:X} for the initial points $x_*^1=x_*^2=z^\natural$.  Notice that $X^1$ now is deterministic with
	\[\frac{d}{dt}X_t^1=v(t),\ \ \mathbb{P}\text{-a.s.}\] Due to the fact that $(\varphi_N,\overline{\mathcal{H}}_N)$ is a solution of~\eqref{weak_KAM:eq:weak_KAM_N}, we have that 
	\begin{equation*}\label{approx_weak_KAM:ineq:varphi_H_N}
		\mathbb{E}\Big[\varphi_N(z^\natural)-\varphi_N(X^2_1)\Big]\leq \overline{\mathcal{H}}_N+\mathbb{E}\int_{0}^1L(X^2_t,v(t))dt.
	\end{equation*}
From this and~\eqref{approx_weak_KAM:equality:u_L_calibrated}, we have that 
\begin{equation}\label{approx_weak_KAM:ineq:H_H_N_below}
	\begin{split}
		\overline{\mathcal{H}}_N-\bar{H}\geq \mathbb{E}\big[(\varphi_{N}(&z^\natural)-u(z^\natural))-(\varphi_{N}(X_1^2)-u(X_1^1))\big]\\&-\mathbb{E}\int_0^1 \big[L(X^2_t,\pi_{N}^*(X_t^2))-L(X^1_t,\pi_{N}^*(X_t^2))\big]dt\\ =
		\mathbb{E}\big[(\varphi_{N}(&z^\natural)-u(z^\natural))-(\varphi_{N}(X_1^2)-u(X_1^2))\big]+\mathbb{E}(u(X^1_1)-u(X^2_1))\\&-\mathbb{E}\int_0^1 \big[L(X^2_t,\pi_{N}^*(X_t^2))-L(X^1_t,\pi_{N}^*(X_t^2))\big]dt\\ \geq \mathbb{E}(u(X^1_1&)-u(X^2_1))-\mathbb{E}\int_0^1 \big[L(X^2_t,\pi_{N}^*(X_t^2))-L(X^1_t,\pi_{N}^*(X_t^2))\big]dt.
	\end{split}
\end{equation} In the latter inequality we used the choice of $z^\natural$ (see \eqref{approx_weak_KAM:ineq:z_min}).
Since $|v(t)|\leq c_6$, Lemmas~\ref{markov:lm:distance},~\ref{approx_weak_KAM:lm:H_N_derivaitive} and the Jensen's inequality yield the estimate
\[\mathbb{E}(u(X^1_1)-u(X^2_1))\geq -C_8'N^{-1/2},\] where $C_8'\triangleq c_1d^{1/4}(c_6)^{1/2}$.
Using the same arguments and  assumption~\ref{prel:assumption:c}, we conclude that
	\[\mathbb{E}|L(X^1_t,v(t))-L(X^2_t,v(t))|\leq K(c_6)\cdot\mathbb{E}|X^1_t-X^2_t|\leq C_9'N^{-1/2}t^{1/2},\] where \[C_9'\triangleq K(c_6) d^{1/4}(c_6)^{1/2}.\] 
	Evaluating the right-hand side of~\eqref{approx_weak_KAM:ineq:H_H_N_below} by means of  these inequalities, we conclude that  
	\[\overline{\mathcal{H}}_N-\bar{H}\geq - C_{10}'N^{-1/2}C_8'N^{-1/2},\] where $C_{10'}\triangleq C_8'+\frac{3}{2}C_9'$.  This and estimate~\eqref{approx_weak_KAM:ineq:H_H_N} provide the conclusion of the theorem with $C_2\triangleq C_7'\vee C_{10}'$.
	
\end{proof}

\subsection{Limits of the functions $\varphi_{N}$} 
Let $(\varphi_{N},\overline{\mathcal{H}}_N)$ be a solution of~\eqref{weak_KAM:eq:weak_KAM_N}.
Lemma \ref{approx_weak_KAM:lm:H_N_derivaitive} implies that the function $\varphi_{N}$ is Lipschitz continuous on $\Lambda_N$ with the Lipschitz constant equal to $c_4$. Below, we denote by $\hat{\varphi}_N$ an extension of the function $\varphi_{N}$ onto the whole torus $\td$ that is Lipschitz continuous with the same constant, i.e., $\hat{\varphi}_N:\td\rightarrow\mathbb{R}$ is such that 
\begin{itemize}
	\item $\hat{\varphi}_N(x)=\varphi_{N}(x)$ whenever $x\in\Lambda_N$;
	\item $\hat{\varphi}_n$ is $c_4$-Lipschitz continuous function.
\end{itemize} The existence of such function can be shown using, for example, the McShane's methods. In this case, we let
\[\hat{\varphi}_N(x)\triangleq \min\big\{\varphi_{N}(y)+c_4|x-y|:\, y\in\Lambda_N\big\}.\]
Notice that the functional part of a solution of the weak KAM equation is defined up to an additive constant. Thus, it is reasonable to fix a value at some point. For definiteness, we choose  this point equal to $\mathbf{0}$ that is the equivalent class corresponding to points with integer coordinates. Thus, without loss of generality, we assume that 
\[\hat{\varphi}_N(\mathbf{0})=0.\]
\begin{theorem}\label{approx_weak_KAM:th:function}
	The sequence of functions $\{\hat{\varphi}_N\}_{N=1}^\infty$ is precompact in $C(\td)$. If a function $u:\td\rightarrow\mathbb{R}$ is  its accumulation point, then $(u,\bar{H})$ is a solution of the weak KAM equation on the torus~\eqref{intro:eq:weak_KAM}.
\end{theorem}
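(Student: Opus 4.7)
The precompactness statement follows from Arzel\`a--Ascoli. By construction $\hat\varphi_N$ is $c_4$-Lipschitz on $\td$ (this is the continuous extension of the Lipschitz estimate in Lemma~\ref{approx_weak_KAM:lm:H_N_derivaitive}), so the family $\{\hat\varphi_N\}_{N=1}^\infty$ is equicontinuous. Combined with the normalization $\hat\varphi_N(\mathbf{0})=0$ and $\operatorname{diam}(\td)\le\sqrt{d}/2$, this forces $|\hat\varphi_N(x)|\le c_4\sqrt{d}/2$ for every $x\in\td$ and every $N$, whence precompactness in $C(\td)$ follows.

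Let $\hat\varphi_{N_k}\to u$ uniformly on $\td$. Then $u$ is $c_4$-Lipschitz, and Theorem~\ref{main_res:th:approx_weak_KAM} already supplies $\overline{\mathcal{H}}_{N_k}\to\bar H$. To verify that $(u,\bar H)$ is a viscosity solution of~\eqref{intro:eq:weak_KAM}, I will pass to the limit in~\eqref{weak_KAM:eq:weak_KAM_N} by a lattice test-function argument. Consider first the subsolution inequality: fix $x_0\in\td$ and $\psi\in C^1(\mathbb{B}_r(x_0))$ such that $u-\psi$ attains a local maximum at $x_0$; after replacing $\psi$ by $\psi(\cdot)+|\cdot-x_0|^2$, which leaves $\nabla\psi(x_0)$ untouched, this maximum may be assumed strict. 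Choose $\rho<r$ and, for each large $k$, pick $x_k^*\in\Lambda_{N_k}\cap\overline{\mathbb{B}}_\rho(x_0)$ at which $\varphi_{N_k}-\psi$ attains its maximum over the finite set $\Lambda_{N_k}\cap\overline{\mathbb{B}}_\rho(x_0)$. Uniform convergence of $\hat\varphi_{N_k}-\psi$ to $u-\psi$ together with the strictness of the extremum forces $x_k^*\to x_0$, so for large $k$ all $2d$ neighbours $x_k^*\pm N_k^{-1}e_i$ sit in $\overline{\mathbb{B}}_\rho(x_0)\subset\mathbb{B}_r(x_0)$ and the finite differences involved in $\Delta_{N_k}\psi(x_k^*)$ are meaningful.

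The lattice maximality then yields $\Delta_{N_k,i}^{\pm}\varphi_{N_k}(x_k^*)\le\Delta_{N_k,i}^{\pm}\psi(x_k^*)$ for each $i\in\{1,\ldots,d\}$, i.e. the $2d$-tuple $(-\Delta_{N_k})\varphi_{N_k}(x_k^*)$ dominates $(-\Delta_{N_k})\psi(x_k^*)$ entrywise. The key structural remark is that, since the pairing $\xi\cdot v=\sum_i[\xi_i^+v_i^++\xi_i^-v_i^-]$ has \emph{nonnegative} weights $v_i^\pm$, the map $\xi\mapsto\mathcal{H}_N(x,\xi)$ is componentwise nondecreasing. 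Consequently
\[\overline{\mathcal{H}}_{N_k}=\mathcal{H}_{N_k}(x_k^*,(-\Delta_{N_k})\varphi_{N_k}(x_k^*))\ge\mathcal{H}_{N_k}(x_k^*,(-\Delta_{N_k})\psi(x_k^*)).\]
For the right-hand side I invoke consistency: since $\psi\in C^1$, the finite differences $\Delta_{N_k,i}^+\psi$ and $\Delta_{N_k,i}^-\psi$ converge to $\partial_i\psi$ and $-\partial_i\psi$ uniformly on $\overline{\mathbb{B}}_\rho(x_0)$, so $(-\Delta_{N_k})\psi(x)\cdot v\to-\nabla\psi(x)\cdot v$ uniformly in $x$ and in $v$ on bounded sets; together with superlinearity~\ref{prel:assumption:super_linear}, which confines the supremum defining $\mathcal{H}_N$ to a bounded set of velocities, this yields $\mathcal{H}_{N_k}(x_k^*,(-\Delta_{N_k})\psi(x_k^*))\to H(x_0,-\nabla\psi(x_0))$. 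Passing to the limit gives $H(x_0,-\nabla\psi(x_0))\le\bar H$. The supersolution inequality at a strict local minimum is obtained by the mirror argument with a lattice minimizer.

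The main obstacle is neither the precompactness nor the limiting passage per se, but the observation that the componentwise order on $(-\Delta_N)$-gradients is the \emph{right} one to make $\mathcal H_N$ monotone, an observation that hinges on the explicit nonnegative structure of the pairing in~\eqref{prel:intro:H_lattice}. Once this monotonicity and the $C^1$-consistency of the one-sided finite differences are in hand, the argument becomes a discrete analogue of the classical stability theorem for viscosity solutions, and the identification of the limit as a solution of~\eqref{intro:eq:weak_KAM} is automatic.
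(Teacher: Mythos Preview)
Your argument is correct and takes a genuinely different route from the paper's proof.

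The paper proves the viscosity inequalities via the stochastic representation: given a test point $x$ and a sequence $z_l\in\Lambda_{N_l}$ with $z_l\to x$, it builds the coupled process $(X^{l,1},X^{l,2})$ of Lemma~\ref{markov:lm:distance} (with constant velocity $v$ for the subsolution side, with the optimal lattice strategy $\pi_{N_l}^*$ for the supersolution side), integrates on $[0,T]$, uses the $L^2$ distance estimate to pass $l\to\infty$, and then sends $T\to0$ after dividing by $T$. In other words, the paper goes through the Lax--Oleinik/dynamic-programming characterization of viscosity solutions and reuses the probabilistic coupling machinery built for Theorems~\ref{rate_lambda:th:rate} and~\ref{main_res:th:approx_weak_KAM}.

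Your proof is instead a discrete Barles--Souganidis type stability argument: lattice extremizers of $\varphi_{N_k}-\psi$ converge to the continuous extremizer, one-sided finite-difference inequalities at the lattice extremum, and then the key observation that $\xi\mapsto\mathcal{H}_N(x,\xi)$ is componentwise nondecreasing because the pairing $\xi\cdot v=\sum_i[\xi_i^+v_i^++\xi_i^-v_i^-]$ uses only nonnegative weights. This monotonicity is precisely what makes the upwind scheme consistent with viscosity comparison, and once it is isolated the rest is routine. Your approach is more elementary and self-contained for this particular statement (no need for Lemma~\ref{markov:lm:distance} or any stochastic processes), and it makes transparent why the specific choice of generator $Q^N$ leads to a convergent scheme. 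The paper's approach, on the other hand, keeps the argument homogeneous with the rest of the work and would extend more readily to situations where a clean scheme-monotonicity is not available but a coupling estimate still is.
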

\begin{proof}
	The fact that  the sequence $\{\hat{\varphi}_N\}_{N=1}^\infty$ is precompact directly follows from its definition.
	
	To prove the second part, we assume that there exist a sequence $\{N_l\}_{l=1}^\infty$ and a Lipschitz continuous function $u$ such that 
	\[\|\hat{\varphi}_{N_l}-u\|\rightarrow 0\text{ as }l\rightarrow\infty.\]
	
	Further, let 
	\begin{itemize}
	 \item $x\in \td$; 
	 \item $\{z_{l}\}_{l=1}^\infty\subset \td$ such that $z_l\in\Lambda_{N_l}$ and $z_l\rightarrow x$ as $l\rightarrow\infty$;  \item $v\in \rd$.
	\end{itemize}
	For each natural $l$, we construct a 6-tuple $(\Omega^l,\mathcal{F}^l,\{\mathcal{F}^l\}_{t\in [0,+\infty)},\mathbb{P}^l,X^{l,1},X^{l,2})$ satisfying the conditions of Lemma~\ref{markov:lm:distance} for $N=N_l$, the strategy $\pi(t,x)\equiv v$ and $x_*^1=x_*^2=z_l$. Thus,  
	\begin{equation}\label{approx_weak_KAM:ineq:X_1_X_2}
		\mathbb{E}^l|X^{l,1}_t-X^{l,2}_t|^2\leq \sqrt{d}|v|tN_l^{-1}.\end{equation} Here $\mathbb{E}^l$ is an expectation corresponding to the probability $\mathbb{P}^l$.
	
	Notice that the process $X^{1,l}$ is deterministic and satisfies
	\[X^{l,1}_t=z_l+vt,\ \ \mathbb{P}\text{-a.s.}\]
	Let $T$ be a positive number. We have that 
	\[\varphi_{N_l}(z_l)-\mathbb{E}^l\varphi_{N_l}(X^{l,2}_T)
	\leq \mathbb{E}^l\int_0^TL(X^{l,2}_t,v)dt+\overline{\mathcal{H}}_{N_l}T.
	\] From~\eqref{approx_weak_KAM:ineq:X_1_X_2}, Lemma~\ref{approx_weak_KAM:lm:H_N_derivaitive} and the definition of the function $K$ (see~\eqref{prel:intro:K_c}), we conclude that 
	\[\begin{split}
	\varphi_{N_l}(&x)-\varphi_{N_l}(x+vT)-L(z_l,v)T
	\\&\leq \overline{\mathcal{H}}_{N_l}T+c_4(d^{1/4}|v|^{1/2}T^{1/2}N_l^{-1/2}+2|z_l-x|)\\&{}\hspace{70pt}+K(|v|)d^{1/4}|v|^{1/2}T^{3/2}N_l^{-1/2}+K(|v|)|v|T^{3/2}.
\end{split}\] Passing to the limit when $l\rightarrow \infty$, we obtain that 
\[u(x)-u(x+vT)-L(x,v)T\leq \bar HT+K(|v|)|v|T^{3/2}.\]

Now let $\psi$ be a smooth function such that, for some $r>0$, the mapping $\mathbb{B}_r(x)\ni y\mapsto u(y)-\psi(y)$ attains the maximum at $x$. If $|v|T<r$, we have that 
 \[\psi(x)-\psi(x+vT)-L(x,v)T\leq \bar{H}T+K(|v|)|v|T^{3/2}.\] Dividing both parts by $T$ and passing to the limit when $T\rightarrow 0$, we obtain that 
 \[-\nabla\psi(x) v-L(x,v)\leq \bar{H}.\] Since the choice of $v$ is arbitrarily,  we conclude that
 \begin{equation}\label{approx_weak_KAM:ineq:viscosity_sub}
 	H(x,-\nabla\psi(x))\leq \bar{H}.
 \end{equation}	

This provides  the first part of the definition of the viscosity solution.
Let us prove the second part.

As above, given $x\in\td$, we consider a sequence $\{z_l\}_{l=1}^\infty\subset\td$ converging to $x$ such that $z_l\in\Lambda_{N_l}$. For each natural $l$, let $\pi_{N_l}^*$ be such that 
\[\pi_{N_l}^* \in\underset{v\in \rd}{\operatorname{Argmax}}\Big[(-\Delta_{N_l})\varphi_{N_l}(x)\cdot v-L(x,v)\Big].\]
 Lemma~\ref{approx_weak_KAM:lm:policy_bound} says that 
 \[|\pi_{N_l}^*|\leq c_5.\] There exists a  6-tuple $(\Omega^l,\mathcal{F}^l,\{\mathcal{F}^l\}_{t\in [0,+\infty},\mathbb{P}^l,X^{l,1},X^{l,2})$ satisfying conditions of Lemma~\ref{markov:lm:distance} for $x_*^1=x_2^*=z_l$ and the stationary strategy $\pi_{N_l}^*$. Arguing as in the proof of the first viscosity inequality, we conclude that 
 \[\mathbb{E}^l|X^1_t-X^2_t|^2\leq \sqrt{d}c_5tN_l. \] Here, as above, $\mathbb{E}^l$ stands for the expectation corresponding to the probability $\mathbb{P}^l$. Due to the choice of the strategy $\pi_{N_l}^*$, we have that 
 \begin{equation}\label{approx_weak_KAM:equality:varphi_N_l_L}
 	\varphi_{N_l}(z_l)- \mathbb{E}^l\varphi_{N_l}(X^{l,2}_T)= \mathbb{E}^l\int_{0}^T L(X^{l,2}_T,\pi_{N_l}^*(X^{l,2}_t))dt+\overline{\mathcal{H}}_{N_l}T.
 \end{equation} 
Put
 $V^l(t)\triangleq \pi_{N_l}^*(X^{2,l}_t)$.  The construction of the processes $X^{1,l}$, $X^{2,l}$ gives that 
 \[X_t^{1,l}=z_l+\int_0^t V^l(s)ds,\ \ \mathbb{P}\text{-a.s.}\]
  Thanks to Lemma~\ref{approx_weak_KAM:lm:u}, the Lipschitz continuity of the function $\hat{\varphi}_{N_l}$, the definition of the function $K$ (see \eqref{prel:intro:K_c}) and equality~\eqref{approx_weak_KAM:equality:varphi_N_l_L}, we have that 
 \[\begin{split}
 u(x)-\mathbb{E}^lu\Bigg(x+\int_{0}^TV^l(t)dt\Bigg)\geq &\mathbb{E}\int_{0}^T L(x,V^l(t))dt+\overline{\mathcal{H}}_{N_l}T \\- 2\|u-\varphi_{N_l}\|&-c_4(2|z_l-x|+d^{1/4}c_5T^{1/2}N_{l}^{-1/2})\\&-K(c_5)(|z_l-x|+c_5d^{1/4}T^{3/2}N^{-1/2}_l+c_5T^{3/2}).
 \end{split}\] Now let $\psi$ be a smooth function defined in $\mathbb{B}_r(x)$  for some $r>0$ such that the mapping $\mathbb{B}_r(x)\ni y\mapsto u(y)-\psi(y)$ attains the minimum at $x$. For $T$ such that $c_5T<r$, we have that 
\[\psi(x)-\mathbb{E}^l\psi\Bigg(x+\int_{0}^TV^l(t)dt\Bigg)\geq u(x)- \mathbb{E}^lu\Bigg(x+\int_{0}^TV^l(t)dt\Bigg).\] Additionally, since $|V^l(t)|\leq c_5$, $\mathbb{P}$-a.s., the following inequality holds true:
\[H(x,-\nabla\psi(x))T\geq \psi(x)-\mathbb{E}^l\psi\Bigg(x+\int_{0}^TV^l(t)dt\Bigg)-\mathbb{E}^l\int_{0}^T L(x,V^l(t))dt+o(T),\] where $o(T)/T\rightarrow 0$ as $T\rightarrow 0$ uniformly w.r.t. $l$. Here we used the definition of the Hamiltonian $H$. Thus,
 \[\begin{split}
	H(x,-\nabla\psi(x))T\geq \overline{\mathcal{H}}_{N_l}T \\- 2\|u-\varphi_{N_l}\|&-c_1(2|z_l-x|+d^{1/4}c_5T^{1/2}N_{l}^{-1/2})\\&-K(c_5)(|z_l-x|+c_5d^{1/4}T^{3/2}N^{-1/2}_l+c_5T^{3/2})+o(T).
\end{split}\] Passing to the limit  when $l\rightarrow\infty$, we have that 
\[	H(x,-\nabla\psi(x))T\geq \overline{\mathcal{H}}_{N_l}T+K(c_5)c_5T^{3/2}+o(T).\] Dividing this inequality by $T$ and passing to the limit as $T\rightarrow 0$, we have that 
\[	H(x,-\nabla\psi(x))\geq \overline{\mathcal{H}}_{N_l}\] for each smooth  function $\psi$ such that the mapping $\mathbb{B}_r(x)\ni y\mapsto u(y)-\psi(y)$ attains the minimum at $x$. This together with \eqref{approx_weak_KAM:ineq:viscosity_sub} gives the fact that $u$ is a viscosity solution of~\eqref{intro:eq:weak_KAM}.
\end{proof}

\section{Mather measures on $\Lambda_N$ and their limit behavior}\label{sect:Mather}

As in the continuous space case, the constant $\overline{\mathcal{H}}_N$ can be characterized using a Mather measure. To introduce this concept for the lattice system, we first denote by $\mathcal{M}_N$ the set of probabilities on $\LNr$ with finite integral of the function $\mathscr{w}$ defined by~\eqref{prel:intro:weight}, i.e,
\[\mathcal{M}_N\triangleq \Bigg\{\nu\in\mathcal{P}(\LNr):\int_{\LNr}\mathscr{w}(v)\nu(d(x,v))<\infty\Bigg\}.\] Here the function $\mathscr{w}$ is defined by~\eqref{prel:intro:weight}.
Obviously, $\mathcal{M}_N\subset \mathcal{M}$.
\begin{definition}\label{mather:def:holonomic} A measure $\mu\in \mathcal{P}(\Lambda_N\times\rd)$ is called a holonomic for the lattice $\Lambda_N$ if, for each $\phi:\Lambda_N\rightarrow\mathbb{R}$, the following equality holds true:
	\[\int_{\Lambda_N\times \rd}\big(\Delta_{N}\phi(x)\cdot v\big)\mu(d(x,v))=0.\] 
\end{definition}
\begin{definition}\label{mather:def:mather} A measure $\mu_N\in\mathcal{M}_N$ is called a Mather measure for the lattice $\Lambda_N$ provided that 
	\begin{itemize}
		\item  $\mu_N$ is holonomic for the lattice $\Lambda_N$;
		\item 
		\[\begin{split}\int_{\LNr}L(x&,v)\mu_N(d(x,v))\\&=\min\Bigg\{\int_{\LNr}L(x,v)\nu_N(d(x,v)):\ \ \nu\in\mathcal{M}_N,\, \nu\text{ is holonomic} \Bigg\}.\end{split}\]
	\end{itemize}
\end{definition}

Below, we denote by $\overline{\mathbb{B}}_c$ the closed ball of the radius $c$ in $\rd$ centered in the origin.

\begin{theorem}\label{mather:th:mather_ex}
	There exists a constant $C_3$ such that, for each $N$, one can find a Mather measure for the lattice $\Lambda_N$  concentrated on $\LNB{C_3}$. 
\end{theorem}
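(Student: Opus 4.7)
My plan is to construct a Mather measure $\mu_N$ explicitly from the solution of the weak KAM equation on $\Lambda_N$, and then verify its optimality by a one-line duality estimate against arbitrary holonomic competitors in $\mathcal{M}_N$. The support bound $C_3$ will fall out directly from the uniform $L^\infty$ bound $|\pi_N^*(x)|\le c_5$ on the optimal stationary strategy supplied by Lemma~\ref{approx_weak_KAM:lm:policy_bound}.

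For the construction, I would fix a solution $(\varphi_N,\overline{\mathcal{H}}_N)$ of~\eqref{weak_KAM:eq:weak_KAM_N} (granted by Theorem~\ref{main_res:th:existence_weak_KAM}) and let $\pi_N^*$ be a stationary strategy chosen according to~\eqref{approx_weak_KAM:intro:pi_N_star}. Since $\Lambda_N$ is finite, the bounded Kolmogorov matrix $\mathcal{Q}^N[\pi_N^*]$ admits at least one invariant probability $m^*=(m^*_x)_{x\in\Lambda_N}$, equivalently characterized by $\sum_{x\in\Lambda_N} m^*_x(\mathcal{Q}^N[\pi_N^*]\phi)(x)=0$ for every $\phi:\Lambda_N\to\mathbb{R}$. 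I would then put
\[\mu_N(d(x,v))\triangleq\sum_{y\in\Lambda_N}m^*_y\,\mathbbm{1}_{\{y\}}(x)\,\delta_{\pi_N^*(y)}(dv),\]
which is a probability supported on $\LNB{c_5}$ and thus belongs to $\mathcal{M}_N$.

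Combining~\eqref{markov:eq:Q_Delta} with invariance of $m^*$ gives, for every $\phi:\Lambda_N\to\mathbb{R}$,
\[\int_{\LNr}\bigl(\Delta_N\phi(x)\cdot v\bigr)\mu_N(d(x,v))=\sum_{x\in\Lambda_N}m^*_x(\mathcal{Q}^N[\pi_N^*]\phi)(x)=0,\]
so $\mu_N$ is holonomic for $\Lambda_N$. Next, the extremality of $\pi_N^*$ together with~\eqref{weak_KAM:eq:weak_KAM_N} yields the pointwise identity $L(x,\pi_N^*(x))=(-\Delta_N)\varphi_N(x)\cdot\pi_N^*(x)-\overline{\mathcal{H}}_N$; integrating it against $m^*$ and invoking the holonomic identity just established with $\phi=-\varphi_N$ produces $\int L\,d\mu_N=-\overline{\mathcal{H}}_N$. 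For the matching lower bound, I take an arbitrary holonomic $\nu\in\mathcal{M}_N$, add the null term $\int\Delta_N\varphi_N(x)\cdot v\,d\nu=0$, and then minimize pointwise in $v$:
\[\int L\,d\nu=\int\bigl[L(x,v)+\Delta_N\varphi_N(x)\cdot v\bigr]d\nu(x,v)\ge-\int_{\Lambda_N}\mathcal{H}_N(x,-\Delta_N\varphi_N(x))\,\nu_\Lambda(dx)=-\overline{\mathcal{H}}_N,\]
where $\nu_\Lambda$ is the $\Lambda_N$-marginal of $\nu$ and the last equality is~\eqref{weak_KAM:eq:weak_KAM_N}. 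The theorem then follows with $C_3\triangleq c_5$.

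The main obstacle I anticipate is the legitimacy of this final duality chain, which invokes the holonomic identity for $\phi=\varphi_N$ against an integrand linear in $v$ while $\nu\in\mathcal{M}_N$ only carries a finite integral of the weight $\mathscr{w}$. I expect this to be resolved by the uniform bound $|\Delta_N\varphi_N(x)|\le c_4$ from Lemma~\ref{approx_weak_KAM:lm:H_N_derivaitive} combined with the superlinearity~\ref{prel:assumption:super_linear}, which forces $|v|$ to be dominated by $\mathscr{w}(v)$ up to a constant and so guarantees absolute integrability of $\Delta_N\varphi_N(x)\cdot v$ against every $\nu\in\mathcal{M}_N$; the remainder of the argument is either the standard finite-state invariant-measure theorem or direct algebraic manipulation with the weak KAM relation.
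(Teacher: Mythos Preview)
Your proposal is correct and follows essentially the same route as the paper: construct $\mu_N$ as the lift of a stationary distribution for the Markov chain generated by the optimal stationary strategy $\pi_N^*$, verify holonomy via~\eqref{markov:eq:Q_Delta}, and compute $\int L\,d\mu_N=-\overline{\mathcal{H}}_N$ from the pointwise weak KAM identity. The only difference is in the lower bound for competitors: the paper disintegrates $\nu$ over $\Lambda_N$, forms the barycentric velocity $\bar{\nu}(x)=\int v\,\nu_x(dv)$, and then combines the weak KAM inequality at $\bar{\nu}(x)$ with Jensen's inequality for the convex $L$; your direct pointwise bound $L(x,v)+\Delta_N\varphi_N(x)\cdot v\ge -\overline{\mathcal{H}}_N$ integrated against $\nu$ is a cleaner Fenchel-duality version of the same step and avoids both the disintegration and the explicit appeal to convexity, at the cost of the (correctly handled) integrability check for $\Delta_N\varphi_N(x)\cdot v$.
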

\begin{proof}
	Let $(\varphi_N,\overline{\mathcal{H}}_N)$ be a solution of the weak KAM equation on the lattice $\Lambda_N$. Let $\pi_N^*$ be defined by~\eqref{approx_weak_KAM:intro:pi_N_star}. By Lemma~\ref{approx_weak_KAM:lm:policy_bound},  $|\pi_{N}^*(x)|\leq c_5$ on $\Lambda_N$. The Kolmogorov matrix $\mathcal{Q}^N[\pi_N^*]$ determines a Markov chain on the lattice $\Lambda_N$. It is well known (see, for instance, \cite[Theorem 5.4.6]{Strook_markov}) that this Markov chain has at least one stationary distribution, i.e., a sequence $\hat{m}_N=\{\hat{m}_{N,x}\}_{x\in\Lambda_N}$ such that, for each $y\in \Lambda_N$,
	\begin{equation}\label{mather:equality:Q}
		\sum_{x\in\Lambda_N}\hat{m}_{N,x}\mathcal{Q}_{x,y}^N[\pi_{N}]=0.\end{equation}
	
	Now let $\phi:\Lambda_N\rightarrow\mathbb{R}$. Taking into account the definition of the matrix $\mathcal{Q}^N$, from~\eqref{markov:eq:Q_Delta} and~\eqref{mather:equality:Q}, we obtain that 
	\begin{equation}\label{mather:equality:m_Q_phi}
		\begin{split}
			0&=\sum_{y\in\Lambda_N}\Bigg[\sum_{x\in\Lambda_N}\hat{m}_{N,x} \mathcal{Q}_{x,y}^N[\pi_N^*]\Bigg]\phi(y)= \sum_{x\in\Lambda_N}\hat{m}_{N,x}\Bigg[\sum_{y\in\Lambda_N} \mathcal{Q}_{x,y}^N[\pi_N^*]\phi(y)\Bigg]\\&=
			\sum_{x\in\Lambda_N}\hat{m}_{N,x}\sum_{i=1}^d\Big(\Delta_{N,i}^+\phi(x) (\pi_N^*(x))_i^++ \Delta_{N,i}^-\phi(x) (\pi_N^*(x))_i^-\Big)\\&=\sum_{x\in\Lambda_N}\hat{m}_{N,x} \Big(\Delta_{N}\phi(x)\cdot (\pi_N^*(x))\Big).
		\end{split}
	\end{equation} Set
	\begin{equation}\label{mather:intro:hat_mu}\hat{\mu}_N\triangleq \sum_{x\in\Lambda_N}\hat{m}_{N,x}\delta_{(x,\pi_{N}^*(x))}.\end{equation} Here, $\delta_w$ stands for the Dirac measure concentrated at $w$. Due to~\eqref{mather:equality:m_Q_phi}, we have that 
	\[\int_{\LNr}\Big(\Delta_{N}\phi(x)\cdot v\Big)\hat{\mu}_N(d(x,v))=0,\] i.e., $\hat{\mu}_N$ is holonomic. Furthermore, $\hat{\mu}_N$ is concentrated on $\LNB{C_3}$ for the constant $C_3\triangleq c_5$ that does not depend on a number $N$. 
	
	Now let us prove that 
	\begin{equation}\label{mather:equality:hat_mu_H_N}
		\int_{\LNr}L(x,v)\hat{\mu}_N(d(x,v))=-\overline{\mathcal{H}}_N.
	\end{equation}
	Recall that, due to the choice of the strategy $\pi_N^*$, we have that, for each $x\in\Lambda_N$,
	\[\Delta_{N}\varphi_{N}(x)\cdot\pi_N^*(x)+L(x,\pi_{N}^*(x))=-\overline{\mathcal{H}}_N.\] Multiplying each equality on $\hat{m}_x$ and summing up, we obtain that
	\[\sum_{x\in\Lambda_N}\Bigg[\hat{m}_{x}\big(\Delta_{N}\varphi_{N}(x)\cdot\pi_N^*(x)+L(x,\pi_{N}^*(x))\big)\Bigg]=-\overline{\mathcal{H}}_N.\] Equality~\eqref{mather:equality:m_Q_phi} and the definition of the measure $\hat{\mu}_N$ (see~\eqref{mather:intro:hat_mu}) yield that~\eqref{mather:equality:hat_mu_H_N} holds true.

	To complete the proof it suffices to show that, for each probability $\nu\in\mathcal{M}_N$ that is holonomic, one has that 
	\begin{equation}\label{mather:equality:nu_H_N}
		\int_{\LNr}L(x,v)\nu(d(x,v))\geq -\overline{\mathcal{H}}_N.
	\end{equation} Indeed, given a holonomic measure $\nu$, let
	$m_{x}\triangleq \nu(\{x\times \rd\})$. Further, denote by $\nu_{x}$ a probability on $\rd$ such that, for each Borel set $\Upsilon\subset \rd$
	\[\nu_{x}(\Upsilon)=\nu(\{x\}\times\Upsilon).\] Additionally,
	we define a feedback strategy $\bar{\nu}$ by the rule: if $x\in\Lambda_N$,
	\begin{equation}\label{mather:intro:nu_bar}
		\bar{\nu}(x)\triangleq  \int_{\rd}v\nu_{x}(dv).
	\end{equation}
	It is convenient to set, for each $i\in \{1,\ldots,d\}$,
	\[\tilde{\nu}_i^+(x)\triangleq \int_{\rd}v^+_i\nu_{x}(dv),\ \ \tilde{\nu}_i^-(x)\triangleq \int_{\rd}v^-_i\nu_{x}(dv).\]
	Obviously,	\[\bar{\nu}(x)\triangleq \sum_{i=1}^d \Big(\tilde{\nu}_i^+(x)-\tilde{\nu}_i^-(x)\Big)e_i.\] 
	Notice that the numbers $\tilde{\nu}_i^+(x)$ and $\tilde{\nu}_i^-(x)$ are nonnegative. For each  $\phi:\Lambda_N\rightarrow\rd$ and $x\in\Lambda_N$, the following equalities hold true
	\[\begin{split}
		\int_{\rd}\Big(\Delta_{N}\phi(x)\cdot v\Big)\nu_{x}(dv)&=\int_{\rd}\sum_{i=1}^d\Big( \Delta_{N,i}^+\phi(x)\cdot v_i^++\Delta_{N,i}^-\phi(x)\cdot v_i^-\Big)\nu_{x}(dv)\\ &=\sum_{i=1}^d \Big[\Delta_{N,i}^+\phi(x)\tilde{\nu}_i^+(x)+\Delta_{N,i}^-\phi(x)\tilde{\nu}_i^-(x)\Big]=\Delta_{N}\phi(x)\cdot \bar{\nu}(x).\end{split}\] This and the definition of the sequence $m=\{m_{x}\}_{x\in\Lambda_N}$ imply  the following property  for each function $\phi:\Lambda_N\rightarrow\rd$: 
	\begin{equation}\label{mather:equality:holonomic_m}
		0=\int_{\LNr}\Big(\Delta_{N}\phi(x)\cdot v\Big)\nu(d(x,v))= \sum_{x\in\Lambda_N} m_{x}\Big(\Delta_{N}\phi(x)\cdot \bar{\nu}(x)\Big).
	\end{equation}
	
	Now recall that $(\varphi_{N},\overline{\mathcal{H}}_N)$ is a solution of~\eqref{weak_KAM:eq:weak_KAM_N}. Therefore, using the definition of the Hamiltonian $\mathcal{H}_N$, we have that 
	\[
	(-\Delta_{N})\varphi_{N}(x)\cdot\bar{\nu}(x)-L(x,\bar{\nu}(x))\leq \overline{\mathcal{H}}_N.
	\] Taking into account equality~\eqref{mather:equality:holonomic_m}, equality~\eqref{mather:intro:nu_bar} and the convexity of the function $L$ w.r.t. the second variable, we deduce the following estimates:
	\[\begin{split}
		-\overline{\mathcal{H}}_N\leq \sum_{x\in\Lambda_N} m_{x}&\Big[L(x,\bar{\nu}(x))+\Delta_{N}\varphi_{N}(x)\cdot\bar{\nu}(x)\Big]
		\\&=\sum_{x\in\Lambda_N} m_{x}L\Bigg(x,\int_{\rd}v\nu_{x}(dv)\Bigg)\leq 
		\sum_{x\in\Lambda_N} m_{x}\int_{\rd}L(x,v)\nu_{x}(dv)\\&=\int_{\LNr}L(x,v)\nu(d(x,v))\end{split}\] This, in fact, is inequality~\eqref{mather:equality:nu_H_N}. Thus, $\hat{\mu}_N$ is a Mather measure, that is concentrated on $\LNB{C_3}$.
\end{proof}

We complete this section with  the statement providing the limit behavior of Mather measures. Following \cite{Biryuk2010,Sorrentino}, we  put
\[C_L^0(\tdr)\triangleq \Bigg\{\phi\in C(\tdr):\ \ \sup_{(x,v)\in\tdr} \Bigg|\frac{\phi(x,v)}{\mathscr{w}^+(v)}\Bigg|<\infty,\, \lim_{|v|\rightarrow\infty} \frac{\phi(x,v)}{\mathscr{w}(v)}=0\Bigg\},\] where \[\mathscr{w}^+(v)\triangleq \sup_{x\in\td}L(x,v)\vee 1.\] We say that  a sequence $\{\mu_n\}_{n=1}^\infty\subset \mathcal{M}$ converges $(C_L^0)^*$-weakly to $\mu$ iff, for each $\phi\in  C_L^0$,
\[\int_{\tdr}\phi(x,v)\mu_n(d(x,v))\rightarrow \int_{\tdr}\phi(x,v)\mu(d(x,v)).\] Notice that if $\{\mu_n\}_{n=1}^\infty$ converges to $\mu$ $(C_L^0)^*$-weakly, then it converges to the same measure in the narrow sense.

The limiting properties of Mather measures are described as follows.
\begin{theorem}\label{mather:th:mather_limit} Let 
	\begin{itemize}
		\item $\{N_k\}_{k=1}^\infty$ be an increasing sequence of natural numbers;
		\item for each $k$, $\mu_{N_k}$ be a Mather measure for $\Lambda_{N_k}$;
		\item $\{\mu_{N_k}\}_{k=1}^\infty$ $(C_L^0)^*$-weakly converges to a measure $\mu$.
	\end{itemize} Then, $\mu$ is a Mather measure on the torus.
\end{theorem}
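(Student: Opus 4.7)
The plan is to verify for the limit measure $\mu$ the three defining properties of a Mather measure on $\td$: membership in $\mathcal{M}$, classical holonomy against all $\phi\in C^1(\td)$, and the minimization property $\int L\,d\mu=-\bar H$. Overall, I intend to sandwich $\int L\,d\mu$ between the upper bound $-\bar H$ coming from lower semicontinuity of $L$ together with Theorem~\ref{main_res:th:approx_weak_KAM}, and the lower bound $-\bar H$ supplied by the classical weak KAM theory once the holonomy of $\mu$ on $\td$ has been established.

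First I would collect uniform estimates for the $\mu_{N_k}$. The argument following \eqref{mather:equality:nu_H_N} in the proof of Theorem~\ref{mather:th:mather_ex} shows that \emph{every} Mather measure $\mu_N$ for $\Lambda_N$ attains the common minimum value $\int L\,d\mu_N=-\overline{\mathcal{H}}_N$, so Theorem~\ref{main_res:th:approx_weak_KAM} yields $\int L\,d\mu_{N_k}\to -\bar H$. Invoking the superlinear growth~\ref{prel:assumption:super_linear} with $a=1$ then gives $\int|v|\,d\mu_{N_k}\leq -\overline{\mathcal{H}}_{N_k}-g(1)$, a uniform bound. Since $\mathscr{w}(v)\leq L(x,v)+C_L$ for a constant $C_L$ depending only on $L$ (a consequence of $L\geq g(0)$ and the definition of $\mathscr{w}$), the $\mathscr{w}$-integrals of $\mu_{N_k}$ are uniformly bounded as well. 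Because $(C_L^0)^*$-weak convergence implies narrow convergence and the $\mu_{N_k}$ are probabilities, $\mu$ is a probability on $\tdr$; applying the Portmanteau/Fatou inequality to the continuous non-negative function $L-g(0)$ yields
\[\int L\,d\mu\leq \liminf_k\int L\,d\mu_{N_k}=-\bar H,\]
and the same argument for $\mathscr{w}$ gives $\mu\in\mathcal{M}$.

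To prove classical holonomy, fix $\phi\in C^1(\td)$ and let $\tilde\phi_k$ be its restriction to $\Lambda_{N_k}$. Uniform continuity of $\nabla\phi$ on $\td$ makes
\[\epsilon_k \triangleq \sup_{x\in\Lambda_{N_k},\,i\in\{1,\ldots,d\}}\Bigl[\bigl|\Delta_{N_k,i}^+\tilde\phi_k(x)-\partial_{x_i}\phi(x)\bigr|+\bigl|\Delta_{N_k,i}^-\tilde\phi_k(x)+\partial_{x_i}\phi(x)\bigr|\Bigr]\]
tend to zero, so the decomposition $v_i=v_i^+-v_i^-$ gives $|\Delta_{N_k}\tilde\phi_k(x)\cdot v-\nabla\phi(x)v|\leq \epsilon_k\sqrt{d}\,|v|$. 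Combining with the uniform bound on $\int|v|\,d\mu_{N_k}$,
\[\Bigl|\int_{\LNkr}\Delta_{N_k}\tilde\phi_k(x)\cdot v\,d\mu_{N_k} - \int_{\LNkr}\nabla\phi(x)v\,d\mu_{N_k}\Bigr|\to 0.\]
The first integral vanishes by holonomy of $\mu_{N_k}$ on $\Lambda_{N_k}$. For the second, superlinearity of $L$ forces $|v|/\mathscr{w}(v)\to 0$ as $|v|\to\infty$, so $(x,v)\mapsto \nabla\phi(x)v$ lies in $C_L^0(\tdr)$, and the $(C_L^0)^*$-weak convergence of $\mu_{N_k}$ to $\mu$ delivers $\int \nabla\phi(x)v\,d\mu_{N_k}\to \int\nabla\phi(x)v\,d\mu$. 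Hence $\int \nabla\phi(x)v\,d\mu=0$ for every $\phi\in C^1(\td)$.

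Having verified that $\mu\in\mathcal{M}$ is holonomic on $\td$, the characterization of $\bar H$ recalled at the end of Section~\ref{sect:prel} forces $\int L\,d\mu\geq -\bar H$. Combined with the earlier upper bound, $\int L\,d\mu=-\bar H$, so $\mu$ minimizes $\nu\mapsto\int L\,d\nu$ over holonomic measures in $\mathcal{M}$, i.e., $\mu$ is a Mather measure on $\td$. The main obstacle I expect is the holonomy step: it requires comparing the lattice bilinear form $\Delta_{N_k}\tilde\phi_k\cdot v$ to the classical pairing $\nabla\phi\cdot v$ uniformly against $\mu_{N_k}$, for which the uniform $L^1$-bound on $|v|$ extracted from superlinearity---rather than the weaker $\mathscr{w}$-bound encoded in the definition of $\mathcal{M}_N$---is indispensable.
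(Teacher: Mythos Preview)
Your proof is correct and follows essentially the same route as the paper's: the holonomy argument is identical (compare $\Delta_{N_k}\phi\cdot v$ with $\nabla\phi(x)v$ via the uniform bound on $\int|v|\,d\mu_{N_k}$, then pass to the limit using $(x,v)\mapsto\nabla\phi(x)v\in C_L^0$). The only substantive difference is in obtaining $\int L\,d\mu=-\bar H$: the paper passes directly to the limit in $\int L\,d\mu_{N_k}=-\overline{\mathcal{H}}_{N_k}$ via the assumed $(C_L^0)^*$-weak convergence and Theorem~\ref{main_res:th:approx_weak_KAM}, whereas you sandwich $\int L\,d\mu$ between the Portmanteau/Fatou upper bound and the classical weak-KAM lower bound; your version is a bit more self-contained (since $L\notin C_L^0$ literally, the paper's step leans on the uniform integrability implicit in the convergence hypothesis), and you also make the verification $\mu\in\mathcal{M}$ explicit.
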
 Before the proof of this theorem, we introduce the following result.
\begin{corollary}\label{mather:corollary:mather} The sequence of Mather measures $\{\hat{\mu}_N\}_{N=1}^\infty$ defined in Theorem~\ref{mather:th:mather_ex} is precompact. Each its accumulation point is a Mather measure for the continuous phase space. In particular, there exists a Mather measure for the flat torus $\td$ supported on the compact set $\LNB{C_3}$.   
\end{corollary}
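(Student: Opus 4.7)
The plan is to reduce the statement to Theorem~\ref{mather:th:mather_limit} by a standard compactness argument, exploiting the crucial fact that each $\hat{\mu}_N$ from Theorem~\ref{mather:th:mather_ex} is supported on a common compact subset of $\tdr$.

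First, I would observe that, since $\Lambda_N\subset\td$ and $\hat{\mu}_N$ is concentrated on $\LNB{C_3}$, we may view each $\hat{\mu}_N$ as a Borel probability on the compact set $\td\times\overline{\mathbb{B}}_{C_3}\subset\tdr$, where the constant $C_3$ is \emph{independent of} $N$. Prokhorov's theorem then immediately yields that $\{\hat{\mu}_N\}_{N=1}^\infty$ is narrowly precompact in $\mathcal{P}(\tdr)$, and any narrow accumulation point $\mu$ is supported on the same compact set $\td\times\overline{\mathbb{B}}_{C_3}$.

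Next, I would upgrade narrow convergence (along a subsequence $\{N_k\}_{k=1}^\infty$, say $\hat{\mu}_{N_k}\to\mu$) to $(C_L^0)^*$-weak convergence, which is what Theorem~\ref{mather:th:mather_limit} requires. Let $\phi\in C_L^0(\tdr)$. Since all $\hat{\mu}_{N_k}$ and $\mu$ are concentrated on the compact set $K\triangleq \td\times\overline{\mathbb{B}}_{C_3}$, the restriction $\phi|_K$ is continuous and bounded; hence narrow convergence gives
\[\int_{\tdr}\phi(x,v)\hat{\mu}_{N_k}(d(x,v))=\int_{K}\phi\,d\hat{\mu}_{N_k}\longrightarrow \int_K\phi\,d\mu=\int_{\tdr}\phi(x,v)\mu(d(x,v)),\]
which is exactly $(C_L^0)^*$-weak convergence. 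Moreover, the weight $\mathscr{w}$ is bounded on $K$, so each $\hat{\mu}_{N_k}$ and $\mu$ lie in $\mathcal{M}$ automatically.

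Finally, I would invoke Theorem~\ref{mather:th:mather_limit} to conclude that the accumulation point $\mu$ is a Mather measure for the torus $\td$. The in-particular assertion then follows: since $\{\hat{\mu}_N\}_{N=1}^\infty$ is nonempty and precompact, it admits at least one accumulation point $\mu$, which is a Mather measure for $\td$ supported on $\td\times\overline{\mathbb{B}}_{C_3}$ (the statement's ``$\LNB{C_3}$'' should be read as $\td\times\overline{\mathbb{B}}_{C_3}$, the support being a closed set containing all $\hat{\mu}_{N_k}$-supports). There is no real obstacle here; the only subtle point is the passage from narrow to $(C_L^0)^*$-weak convergence, and this is trivialized by the uniform-in-$N$ compact support provided by Theorem~\ref{mather:th:mather_ex}.
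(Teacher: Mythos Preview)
Your proposal is correct and follows essentially the same approach as the paper: both use the uniform compact support $\td\times\overline{\mathbb{B}}_{C_3}$ to obtain narrow precompactness, observe that on measures supported in this compact set narrow convergence and $(C_L^0)^*$-weak convergence coincide, and then invoke Theorem~\ref{mather:th:mather_limit}. Your version merely spells out in more detail the equivalence of the two convergences (and you are right that the ``$\LNB{C_3}$'' in the final sentence should be read as $\td\times\overline{\mathbb{B}}_{C_3}$).
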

\begin{proof}[Proof of Theorem \ref{mather:th:mather_limit}]
	Let $\phi\in C^1(\td)$, we have that, if $x\in\Lambda_N$,
	\[|\Delta_{N,i}^+\phi(x)-\nabla\phi(x)|,|\Delta_{N,i}^-\phi(x)-\nabla\phi(x)|\leq\varsigma_{N,i},\] where the number $\varsigma_{N,i}$ is equal to 
	\[\varsigma_{N,i}[\phi]\triangleq \sup_{\varepsilon\in [-h,h]}|\partial_{x_i}\phi(x+\varepsilon e_i)-\partial_{x_i}\phi(x)|.\] Denote
	\[\varsigma_N[\phi]\triangleq \Bigg[\sum_{i=1}^d(\varsigma_{N,i}[\phi])^2\Bigg]^{1/2}.\]
	Notice that $\varsigma_{N}[\phi]\rightarrow 0$ as $N\rightarrow \infty$. Therefore, for each $\phi\in C^1(\td)$,  
	\[\Big|\Delta_{N}\phi(x)\cdot v-\nabla\phi(x)v\Big|\leq \varsigma_{N}[\phi]\cdot|v|.\]
	
	We have that 
	\begin{equation}\label{mather:equality:int_N_k}
		\begin{split}	
			\int_{\LNkr}\nabla\phi( x)v\mu_{N_k}&(d(x,v))\\=\int_{\LNr}&\Delta_{N_k}\phi( x)\cdot v\mu_{N_k}(d(x,v))\\&-\int_{\LNkr}\Big[\Delta_{N_k}\phi( x)\cdot v-\nabla\phi( x)v\Big]\mu_{N_k}(d(x,v)).\end{split}\end{equation}
	
	The estimate of $\Delta_{N_k}\phi(x)\cdot v-\nabla\phi(x)v$ gives that 
	\[\int_{\LNkr}\Big|\Delta_{N_k}\phi( x)\cdot v-\nabla\phi( x)v\Big|\mu_{N_k}(d(x,v))\leq \varsigma_{N_k}[\phi]\int_{\LNkr}|v|\mu_{N_k}(d(x,v)).\]
	
	Since $L(x,v)\geq |v|+g(1)$, we have that the functions $(x,v)\mapsto |v_i|$ and $(x,v)\mapsto |v|$ lies in $C_L^0$. Thus, the integrals
	$\int_{\LNkr}\nabla\phi(x)v\mu_{N_k}(d(x,v))$, $\int_{\LNkr}\Delta_{N_k}\phi(x)v\mu_{N_k}(d(x,v))$ are well-defined.
	Using $(C_L^0)^*$-weak convergence of the sequence $\{\mu_{N_k}\}$, we conclude that the quantities $\int_{\LNkr}|v|\mu_{N_k}(d(x,v))$ are uniformly bounded. Using this, the fact that each measure $\{\mu_{N_k}\}$ is holonomic for the lattice $\Lambda_{N_k}$, the convergence $\varsigma_{N_k}[\phi]\rightarrow 0$ as $k\rightarrow\infty$, we pass to the limit in~\eqref{mather:equality:int_N_k} and obtain that 
	\[\int_{\tdr}\nabla\phi(x)v\mu(d(x,v))=0.\] Thus, $\mu$ is holonomic.
	
	Since
	\[\int_{\LNkr}L(x,v)\mu_{N_k}(d(x,v))=-\overline{\mathcal{H}}_{N_k},\] using the $(C_L^0)^*$-weak convergence of the sequence $\{\mu_{N_k}\}$ to $\mu$ and Theorem~\ref{main_res:th:approx_weak_KAM}, we arrive at the equality
	\[\int_{\tdr}L(x,v)\mu(d(x,v))=-\bar{H}.\] This gives that $\mu$ is a Mather measure for the flat torus.
\end{proof}

\begin{proof}[Proof of Corollary~\ref{mather:corollary:mather}]
	 Recall that each measure $\hat{\mu}_N$ is   concentrated on $\LNB{C_3}\subset\td\times\overline{\mathbb{B}}_{C_3}$. Furthermore, the set $\mathcal{P}(\td\times\overline{\mathbb{B}}_{C_3})$ is compact in the topology of narrow convergence. Now, to obtain the conclusion of the corollary, it suffices to notice that on $\mathcal{P}(\td\times\overline{\mathbb{B}}_{C_3})$ the narrow convergence coincides with the $(C_L^0)^*$-weak convergence and use Theorem~\ref{mather:th:mather_limit}.
\end{proof}

\bibliography{KAM_Markov_a}

\end{document}